\newtheorem{theorem}{Theorem}
\newtheorem{lemma}[theorem]{Lemma} 
\newtheorem{proposition}[theorem]{Proposition} 
\newtheorem{corollary}[theorem]{Corollary}
\theoremstyle{definition} 
\newtheorem{conjecture}[theorem]{Conjecture}
\newtheoremstyle{named}%
  {}{}						
  {\upshape}				
  {0pt}{\bfseries}			
  {.}						
  {.5em}					
  {\thmname{#1}\thmnote{ #3}}  
\theoremstyle{named}
\newcommand{\of}{\subseteq}
\newcommand{\symd}{\bigtriangleup}
\newcommand{\adj}{\sim}
\newcommand{\nadj}{\not\adj}
\newcommand{\divides}{\mid}
\newcommand{\N}{\mathbb{N}}
\newcommand{\A}{\mathcal{A}}
\newcommand{\C}{\mathcal{C}}
\renewcommand{\d}{\delta}
\newcommand{\e}{\epsilon}
\newcommand\fall[1][t]{_{(#1)}}
\newcommand{\Nof}[2]{N_{#1}({#2})}
\renewcommand{\NG}[1]{\Nof{#1}{G}}
\newcommand{\NTG}{\NG{T}}
\DeclareMathOperator{\ex}{ex}
\DeclareMathOperator{\mex}{mex}
\DeclareMathOperator{\CT}{CT}
\let\k\relax
\DeclareMathOperator{\k}{k}
\DeclarePairedDelimiter{\abs}{|}{|}
\DeclarePairedDelimiter{\set}{\{}{\}}
\DeclarePairedDelimiterX\setof[2]{\{}{\}}{#1\,:\,#2}
\DeclarePairedDelimiter{\parens}{(}{)}
\begin{document}

\title{Stability and Erd\H{o}s--Stone type results for $F$-free graphs with a fixed number of edges}

\date{\today}
\author{Jamie Radcliffe\\\small{University of Nebraska-Lincoln}\\\small{\texttt{jamie.radcliffe@unl.edu}} 
\and Andrew Uzzell\\\small{Grinnell College}\\\small{uzzellan@grinnell.edu}}
\maketitle
\begin{abstract}
    A fundamental problem of extremal graph theory is to ask, ``What is the maximum number of edges in an $F$-free graph on $n$ vertices?'' Recently Alon and Shikhelman proposed a more general, subgraph counting, version of this question. They considered the question of determining the maximum number of copies of a fixed graph $T$ in an $F$-free graph on $n$ vertices. 
    
    In this more general context, where we are no longer counting edges, it is also natural to ask what is the maximum number of copies of $T$ in an $F$-free graph with $m$ edges and no restriction on the number of vertices. Frohmader, in a different context, determined the answer when $T$ and $F$ are both complete graphs. We prove results for this problem analogous to the Erd\H{o}s--Stone theorem, the Erd\H{o}s--Simonovits theorem, and the stability theorem of Erd\H{o}s--Simonovits.
\end{abstract}

\maketitle

\section{Introduction} 
\label{sec:introduction}

\subsection{Extremal numbers and generalizations of extremal numbers} 
\label{sub:extremal_numbers_and_generalizations}

The fundamental problem of extremal graph theory is to compute the extremal number,
\[
    \ex(n,F) = \max\setof{e(G)}{\text{$G$ is an $F$-free graph on $n$ vertices}}.
\]
Recently Alon and Shikhelman~\cite{AlonShikhelman16} proposed a more general version of this problem. Rather than counting edges, they considered the  problem of determining the maximum number of copies of a fixed graph $T$ in an $F$-free graph on $n$ vertices. Letting $\NTG$ be the number of copies of $T$ in $G$, we define
\[
    \ex_T(n,F) = \max\setof{\NTG}{\text{$G$ is an $F$-free graph on $n$ vertices}}.
\] 

Tur\'an's theorem \cite{Turan41} states that $\ex(n,K_{r+1}) = t_{r}(n)$, the number of edges in the Tur\'an graph $T_r(n)$, the complete $r$-partite graph on $n$ vertices with parts as equal in size as possible. Moreover the Tur\'an graph is the unique extremal graph. The following result was proved by Zykov~\cite{Zykov49} (and has since been rediscovered many times).

\begin{theorem}\label{th:Zykov}
    For all $n \geq r \geq t \geq 2$, the maximum in the definition of $\ex_{K_t}(n, K_{r+1})$ is uniquely achieved by the Tur\'an graph~$T_r(n)$. In other words,
    \[
        \ex_{K_t}(n,K_{r+1}) = \Nof{K_t}{T_r(n)}.
    \]
\end{theorem}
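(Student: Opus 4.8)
The plan is to combine Zykov symmetrization with an optimization over complete multipartite graphs. For a vertex $v$ of a graph $G$, write $d_t(v)$ for the number of copies of $K_t$ through $v$; thus $\Nof{K_t}{G}=\tfrac1t\sum_v d_t(v)$. The key operation is \emph{cloning}: when $u\not\sim v$ in $G$, let $G_{v\to u}$ be obtained from $G$ by deleting every edge at $v$ and then joining $v$ to all of $N_G(u)$, so that $u$ and $v$ become nonadjacent twins. First I would record two facts about this operation when $G$ is $K_{r+1}$-free: (i) $G_{v\to u}$ is again $K_{r+1}$-free, since any copy of $K_{r+1}$ in it must contain $v$ but not $u$, and replacing $v$ by $u$ then yields one in $G$; and (ii) since $t\ge 2$ no copy of $K_t$ contains the nonadjacent pair $u,v$, so the copies of $K_t$ through $v$ in $G_{v\to u}$ biject (via $v\mapsto u$) with the copies through $u$ in $G$, whence
\[
    \Nof{K_t}{G_{v\to u}}=\Nof{K_t}{G}-d_t(v)+d_t(u).
\]

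Now take a $K_{r+1}$-free graph $G$ on $n$ vertices attaining $\ex_{K_t}(n,K_{r+1})$. By (ii), whenever $u\not\sim v$ we must have $d_t(u)=d_t(v)$: otherwise cloning the endpoint of smaller $K_t$-degree onto the other strictly increases $\Nof{K_t}{\cdot}$. Next I would show one may take $G$ complete multipartite: if not, its complement is not a disjoint union of cliques, so there is a \emph{bad triple} $u\not\sim v$, $v\not\sim w$, $u\sim w$, necessarily with $d_t(u)=d_t(v)=d_t(w)$; cloning $v$ onto $u$ keeps the graph extremal while making $u,v$ twins, and iterating (organized by a standard tie-break, e.g.\ maximizing the number of edges among extremal graphs) terminates at a complete multipartite extremal graph $H$. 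Being $K_{r+1}$-free, $H$ is complete $\ell$-partite with $\ell\le r$, and $\Nof{K_t}{H}=e_t(a_1,\dots,a_\ell)$, the $t$-th elementary symmetric polynomial in the part sizes $a_i$ (choose $t$ parts, then a vertex from each). It then remains to maximize $e_t$ over $\ell\le r$ and positive integers $a_1,\dots,a_\ell$ summing to $n$. Splitting a part of size $\ge 2$ into two nonempty parts never destroys a copy of $K_t$ and keeps the graph $K_{r+1}$-free while the number of parts stays $\le r$, so the maximum is attained at $\ell=r$; and if $a_i\ge a_j+2$, transferring one vertex from part $i$ to part $j$ changes $e_t$ by $(a_i-a_j-1)\,e_{t-2}(a_k:k\ne i,j)$, which is positive because $t\le r$ makes $e_{t-2}$ of $r-2$ positive integers positive. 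Hence $e_t$ is uniquely maximized by the balanced partition, i.e.\ $H=T_r(n)$, and $\ex_{K_t}(n,K_{r+1})=\Nof{K_t}{T_r(n)}$.

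The uniqueness assertion needs the stronger fact that \emph{every} extremal graph is complete multipartite, and this is the step I expect to be the main obstacle. Symmetrization alone shows only that $T_r(n)$ is \emph{an} optimum, and, unlike the edge-counting case $t=2$ (where an optimum automatically has $t_r(n)$ edges and Tur\'an's theorem applies), for $t\ge 3$ one cannot at once exclude a non-complete-multipartite optimum with fewer edges. I would handle this by the classical Zykov bookkeeping: given an extremal $G$ with a bad triple $u,v,w$, comparing $G$ with $G_{v\to u}$, $G_{u\to v}$, $G_{w\to v}$, $G_{v\to w}$ also forces $d(u)=d(v)=d(w)$, and then tracking how $d_t(w)$ changes when $v$ is cloned onto $u$ (it rises by the number of copies of $K_t$ through the edge $uw$) shows such a configuration cannot occur in an extremal graph. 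Alternatively one can induct on $n$, deleting a vertex of maximum degree $\Delta$ (whose neighborhood is $K_r$-free) and invoking the theorem for $(K_{t-1},K_r)$ and $(K_t,K_{r+1})$, the delicate point being the inequality $\Nof{K_t}{T_r(n-1)}+\Nof{K_{t-1}}{T_{r-1}(\Delta)}\le\Nof{K_t}{T_r(n)}$ together with its equality case. Everything else --- the cloning identities (i)--(ii), the splitting and transfer inequalities for $e_t$, and identifying the balanced complete $r$-partite graph with $T_r(n)$ --- is routine.
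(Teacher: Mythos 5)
The paper does not prove this statement itself---it is quoted from Zykov---so there is no in-paper argument to compare against; I am assessing your proposal on its own terms. The first half is sound: your cloning identities (i) and (ii) are correct, and the edge-count tie-break does in fact work for showing that $T_r(n)$ \emph{attains} the maximum. Among extremal graphs choose one with the most edges; cloning across a nonadjacent pair preserves the $K_t$-count, so edge-maximality forces equality of the \emph{ordinary} degrees $d(u)=d(v)$ across every nonadjacent pair, and the double clone across a bad triple then changes the $K_t$-count by $\k_t(uw)\ge 0$ while increasing the edge count by exactly $1$---a contradiction, so this edge-maximal extremal graph is already complete multipartite, with no iteration needed. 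Combined with your (correct) splitting and transfer computations for $e_t$, this yields $\ex_{K_t}(n,K_{r+1})=\Nof{K_t}{T_r(n)}$.

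The genuine gap is exactly where you predicted: uniqueness. Your double-clone bookkeeping shows only that in an arbitrary extremal graph every bad triple $u\not\sim v$, $v\not\sim w$, $u\sim w$ satisfies $\k_t(uw)=0$; for $t\ge 3$ this is not yet a contradiction, so the assertion that ``such a configuration cannot occur in an extremal graph'' does not follow from what you have written, and the inductive alternative leaves its key inequality and equality analysis unproved. The gap can be closed with one further idea. Given an extremal $G$, delete every edge lying in no copy of $K_t$; no copy of $K_t$ is destroyed, so the spanning subgraph $G^*$ is still extremal, and in $G^*$ every edge lies in some $K_t$, whence every bad triple of $G^*$ would have $\k_t(uw)>0$ and the double clone would strictly increase the count. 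Therefore $G^*$ is complete multipartite, and by your optimization $G^*\cong T_r(n)$. If $G\neq G^*$, then $G$ contains $T_r(n)$ together with an edge inside one of its parts; choosing one vertex from each of the remaining $r-1$ parts (all nonempty since $n\ge r$) produces a $K_{r+1}$, contradicting $K_{r+1}$-freeness. Hence $G=T_r(n)$, completing the uniqueness claim.
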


Now that we are counting copies of $T$, rather than edges, it makes sense to shift away from our resource being a limited number of vertices we are allowed, and consider similar problems for the class of graphs with $m$ edges. We make a third parallel definition.
\[
    \mex_T(m,F) = \max\setof{\NTG}{\text{$G$ is an $F$-free graph with $m$ edges}}.
\]
It is important to note that this definition does not place any restriction on the number of vertices of~$G$.


\subsection{Previous results about $\mex_T(m,F)$} 
\label{sub:previous_results}

Some results about $\mex_T(m,F)$ are known (though not using that terminology). One can even think of the Kruskal--Katona theorem~\cite{Katona68,Kruskal63} as proving a result in this direction. We start with a little background about that theorem.

Given $n$,~$k \in \N$, let $\binom{[n]}{k}$ denote the family of $k$-sets of~$[n]$.  The \emph{colexicographic} or \emph{colex order} on $\binom{[n]}{k}$ is defined as follows: for all $A$,~$B \in \binom{[n]}{k}$, $A < B$ if and only if $\max(A \symd B) \in B$.  For a family $\A\of \binom{[n]}k$ we define the \emph{shadow} of $\A$ on level $p<k$ to be the set
\[
    \partial_p(\A) = \setof[\Big]{B\in \binom{[n]}p}{\exists A\in \A \text{ s.t.\ } B \subseteq A}.
\]
The Kruskal--Katona theorem gives a bound for the minimum size of $\partial_p(\A)$ as a function of the size of $\A$.

\begin{theorem}\label{thm:KK}
    If $\A\of \binom{[n]}k$ and $\C$ is the colex initial segment of $\binom{[n]}k$ of size $\abs{\A}$ then for any $p<k$ we have 
    \[
        \abs{\partial_p(\A)} \ge \abs{\partial_p(\C)}.
    \]
    One should also note $\partial_p(\C)$ is itself an initial segment in the colex order on $\binom{[n]}{p}$. 
\end{theorem}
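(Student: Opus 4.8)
The plan is to reduce the general statement to the case of the \emph{immediate} shadow ($p = k-1$) and then prove that case by the method of compressions. For the reduction, note that $\partial_p(\A) = \partial_p(\partial_{p+1}(\cdots \partial_{k-1}(\A)))$, so an induction on $k-p$ works once we have (i) the immediate-shadow bound, (ii) the fact that $\partial_{k-1}$ of a colex initial segment is again a colex initial segment (a routine fact about the colex order, also visible from the cascade/binomial representation of integers), and (iii) that $s \mapsto \abs{\partial_{k-1}(\C_s)}$ is nondecreasing, where $\C_s$ is the colex initial segment of size $s$ (immediate, since $\C_s \of \C_{s'}$ when $s \le s'$). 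Indeed, the immediate-shadow bound gives $\abs{\partial_{k-1}(\A)} \ge \abs{\partial_{k-1}(\C)}$, and then applying the inductive hypothesis to the $(k-1)$-uniform family $\partial_{k-1}(\A)$ — whose size is at least that of the colex initial segment $\partial_{k-1}(\C)$ — together with (iii) yields the bound at level $p$.

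For the immediate shadow, I would use compressions. For $1 \le i < j \le n$ define $C_{ij}$ on a set $A$ by $C_{ij}(A) = (A \wo \set j) \cup \set i$ when $j \in A$, $i \notin A$, and $(A \wo \set j) \cup \set i \notin \A$, and $C_{ij}(A) = A$ otherwise; put $C_{ij}(\A) = \setof{C_{ij}(A)}{A \in \A}$. Then $\abs{C_{ij}(\A)} = \abs{\A}$, and the key lemma is that $\abs{\partial_{k-1}(C_{ij}(\A))} \le \abs{\partial_{k-1}(\A)}$; this follows from the set containment $\partial_{k-1}(C_{ij}(\A)) \of C_{ij}(\partial_{k-1}(\A))$, proved by a short case analysis on whether a given $(k-1)$-set of the left side contains $i$ or $j$ and on which sets of $\A$ were moved. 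Repeatedly applying compressions $C_{ij}$ that actually move something strictly decreases the monovariant $\sum_{A \in \A} \sum_{a \in A} a$, so after finitely many steps we reach a family $\A^\ast$ with $\abs{\A^\ast} = \abs{\A}$, $\abs{\partial_{k-1}(\A^\ast)} \le \abs{\partial_{k-1}(\A)}$, that is left-compressed, i.e.\ invariant under every $C_{ij}$.

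It remains to show that a left-compressed family $\A^\ast$ satisfies $\abs{\partial_{k-1}(\A^\ast)} \ge \abs{\partial_{k-1}(\C)}$. Since left-compressed families need not be colex initial segments, this needs a further argument, and I would induct on $n$. Writing $\A_0 = \setof{A \in \A^\ast}{n \notin A} \of \binom{[n-1]}{k}$ and $\mathcal{B} = \setof{A \wo \set n}{n \in A \in \A^\ast} \of \binom{[n-1]}{k-1}$, one checks that $\A_0$ and $\mathcal{B}$ are left-compressed, that left-compressedness forces $\mathcal{B} \of \partial_{k-1}(\A_0)$, and that $\abs{\partial_{k-1}(\A^\ast)} = \abs{\partial_{k-1}(\A_0)} + \abs{\partial_{k-2}(\mathcal{B})}$ (splitting the shadow by whether a set contains $n$). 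Applying the inductive hypothesis to $\A_0$ and to $\mathcal{B}$ and comparing with the analogous decomposition of $\C$ reduces the problem to a purely numerical inequality between the shadow sizes of colex initial segments at consecutive uniformities — precisely the content of the cascade-representation form of the theorem — which follows from elementary, if slightly delicate, manipulation of binomial coefficients.

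The main obstacle is twofold: the case analysis verifying $\partial_{k-1}(C_{ij}(\A)) \of C_{ij}(\partial_{k-1}(\A))$, which is routine but must be organized carefully to cover every subcase, and the endgame — equivalently, pinning down the explicit formula for $\abs{\partial_p(\C)}$ in terms of the cascade representation of $\abs{\C}$ and checking that it behaves correctly under the largest-element decomposition. The rest is bookkeeping.
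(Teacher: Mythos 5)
The paper offers no proof of this statement---it is the classical Kruskal--Katona theorem, quoted with citations to Kruskal and Katona---so there is nothing in the paper to compare your argument against; it has to be judged on its own. Your outline is the standard compression proof, and the strategy is sound: the reduction to the immediate shadow via iterated shadows (using that $\partial_{k-1}$ of a colex initial segment is again a colex initial segment, and nestedness of initial segments) is correct; the compression lemma $\partial_{k-1}(C_{ij}(\A)) \of C_{ij}(\partial_{k-1}(\A))$ is true and, together with your monovariant $\sum_{A\in\A}\sum_{a\in A}a$, terminates in a left-compressed family of the same size and no larger shadow; and you are right to flag that left-compressedness does not imply being a colex initial segment, so the further induction on $n$ with the decomposition $\A^\ast = \A_0 \cup \setof{B\cup\set{n}}{B\in\mathcal{B}}$, the inclusion $\mathcal{B}\of\partial_{k-1}(\A_0)$ forced by the compressions $C_{in}$, and the identity $\abs{\partial_{k-1}(\A^\ast)} = \abs{\partial_{k-1}(\A_0)} + \abs{\partial_{k-2}(\mathcal{B})}$ is genuinely needed. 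What you have is an outline rather than a complete proof: the two steps you defer---the case analysis for the compression lemma and the numerical endgame comparing shadow sizes of colex initial segments via the cascade representation of $\abs{\C}$---are precisely where all the work lives, and the second is the one most likely to go wrong in execution (one must verify the recursion for $\abs{\partial_p(\C)}$ under the largest-element split, including the boundary case where $\C$ contains all of $\binom{[n-1]}{k}$). Both are standard and known to close, so I see no step that would fail, but neither is yet done. One small alternative worth knowing: Frankl's short proof uses only the compressions $C_{1j}$ together with induction on $k$ and on $\abs{\A}$, and thereby avoids the cascade-representation arithmetic entirely; if you want a complete written proof, that route involves considerably less bookkeeping than the one you have sketched.
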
 

It is an immediate corollary of Theorem~\ref{thm:KK} that for every $m$ and~$t$, the maximum number of copies of~$K_t$ in a graph with $m$ edges is achieved by the graph with vertex set~$[n]$ whose edge set consists of the first $m$ elements of~$\binom{[n]}{2}$ in colex order. We call this graph the \emph{colex graph} with $m$ edges, and denote it $C(m)$. This is a slight abuse of notation, since we have not specified $n$, but in our problems we only care that we have enough vertices, not how many there actually are.

Frohmader~\cite{Frohmader08} determined the value of $\mex_{K_s}(m, K_{r+1})$ for all $r \geq s \geq 3$.  His results were phrased in terms of simplicial complexes, so let us take a moment to recall the relevant definitions.  

Let $\Delta$ be a simplicial complex.  If $F$ is a face of~$\Delta$, then the \emph{dimension} of~$F$ is $\dim F = |F| - 1$.  The \emph{dimension} of~$\Delta$ is~$\dim \Delta = \max_{F \in \Delta} \dim F$.  Let $d = \dim \Delta + 1$ and, for each $i$, $-1 \leq i \leq d-1$, let $f_i$ denote the number of $i$-dimensional faces in $\Delta$.  Recall that the \emph{$f$-vector} of~$\Delta$ is the $d$-tuple~$(f_0, \dots, f_{d-1})$.  We say that a complex~$\A$ is \emph{$r$-colorable} if there is a partion of its vertex set into $r$ parts such that each set in $\A$ meets each part in at most one element.

A simplicial complex~$\Delta$ is called a \emph{flag complex} if every minimal non-face of~$\Delta$ has two elements.  This is equivalent to the notion of a ``clique complex'': the \emph{clique complex} of a graph~$G$ is the simplicial complex~$\Gamma$ whose vertex set is~$V(G)$ and whose faces are the cliques of~$G$.  It is easy to see that a flag complex is $r$-colorable if and only if it is the clique complex of an $r$-colorable graph.  We say that a complex~$\Delta$ is \emph{balanced} if $\dim \Delta = d-1$ and $\Delta$ is $d$-colorable.

Kalai (unpublished; see~\cite[p.~100]{Stanley96}) and Eckhoff~\cite{Eckhoff88} conjectured that if $\Delta$ is a flag complex, then there exists a balanced  complex~$\Gamma$ with the same $f$-vector as~$\Delta$.  Frohmader~\cite{Frohmader08} proved the Kalai--Eckhoff conjecture. This is in fact sufficient to determine $\mex_{K_t}(m,K_{r+1})$. For completeness we include a proof of this deduction below. 

We will need to quote the ``colored'' version of the Kruskal--Katona theorem, proved by Frankl, F\"uredi, and Kalai~\cite{FranklFurediKalai88}. The role played by the colex order in the Kruskal--Katona theorem is played here by the \emph{$r$-partite colex order}. This is colex order restricted to subsets $A$ of $\N$ such that for all $i, j\in A$ we have $i=j$ or $i \not\equiv j \pmod{r}$.

Given $m$ and $r$, the \emph{colex Tur\'an graph}~$CT_r(m)$ is the graph on vertex set $\N$ whose edge set consists of the first $m$ edges in $r$-partite colex order. (See Figure~\ref{fig:CT}.) Note that if $m = t_r(n)$, then the unique non-trivial component of $CT_r(m)$ is isomorphic to  $T_r(n)$.

\begin{theorem}[\cite{FranklFurediKalai88}]\label{th:FFK}
    If $\A\of \binom{[n]}k$ is $r$-colorable  and $\C$ is the initial segment of $\binom{[n]}k$ in the $r$-partite colex order of size $\abs{\A}$, then for any $p<k$ we have 
    \[
        \abs{\partial_p(\A)} \ge \abs{\partial_p(\C)}.
    \]
    One should also note $\partial_p(\C)$ is itself an initial segment in the $r$-partite colex order on $\binom{[n]}{p}$. 
\end{theorem}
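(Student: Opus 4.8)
The plan is to adapt the combinatorial (compression) proof of the ordinary Kruskal--Katona theorem to the $r$-colored setting. Fix color classes $V_1,\dots,V_r$ of $[n]$ witnessing the $r$-colorability of $\A$; for concreteness take $V_i$ to be the elements of $[n]$ congruent to $i$ modulo $r$, so that the \emph{legal} $k$-sets --- those meeting each $V_i$ in at most one element --- are precisely the $k$-sets on which the $r$-partite colex order is defined, and every member of $\A$ is legal. For $i<j$ in the same class, the compression $A\mapsto(A\wo\{j\})\cup\{i\}$ always produces a legal set; for $i<j$ in different classes it produces a legal set only when $A$ contains no element of the color of $i$. In either case let $C_{ij}(\A)$ be obtained from $\A$ by replacing each such $A$ with its image whenever that image is legal and not already present in $\A$. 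As in the uncolored proof, $\abs{C_{ij}(\A)}=\abs{\A}$ is immediate, and the first main step is the \emph{colored compression lemma}: $\abs{\partial_p(C_{ij}(\A))}\le\abs{\partial_p(\A)}$ for every $p<k$. Granting this, applying the $C_{ij}$ that strictly decrease $\sum_{A\in\A}\sum_{x\in A}x$ until none applies terminates at a family $\A^{*}$ that is \emph{shifted} with respect to the $r$-partite colex order, with $\abs{\A^{*}}=\abs{\A}$ and $\abs{\partial_p(\A^{*})}\le\abs{\partial_p(\A)}$; so it suffices to bound the shadows of shifted families.

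For the second step I would run the ``peel off the largest element'' induction familiar from Kruskal--Katona --- here by induction on the size of the ground set, the new feature being that the link uses one fewer color. Given a shifted $\A$ with largest element $z$, say $z\in V_c$, write $\A=\A_0\cup\setof{A\cup\{z\}}{A\in\A_1}$, where $\A_0$ collects the members of $\A$ avoiding $z$ and $\A_1$ the links $A\wo\{z\}$ of the members containing $z$. Then $\A_0$ is an $r$-colorable family of $k$-sets on a strictly smaller ground set, while $\A_1$ is a family of $(k-1)$-sets using no element of color $c$, hence is $(r-1)$-colorable with colors $[r]\wo\{c\}$. Splitting the $p$-subsets of each set $A\cup\{z\}$ according to whether they contain $z$, and using shiftedness to obtain $\partial_p(\A_1)\of\partial_p(\A_0)$ (a short separate argument covers the degenerate case $\A_0=\0$), one gets $\abs{\partial_p(\A)}=\abs{\partial_p(\A_0)}+\abs{\partial_{p-1}(\A_1)}$. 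The initial segment $\C$ admits the very same decomposition: its deletion is again an $r$-partite colex initial segment, and its link is an initial segment in the ``one fewer color'' order, observations that also establish the companion claim that $\partial_p(\C)$ is an $r$-partite colex initial segment. So, feeding $\A_0$ and $\A_1$ to the inductive hypothesis and then invoking a standard numerical computation with the combinatorial number systems underlying these initial segments, the bound $\abs{\partial_p(\A)}\ge\abs{\partial_p(\C)}$ falls out.

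The colored compression lemma is where I expect the real work to be. In the uncolored case one argues locally: a $p$-set lying in $\partial_p(C_{ij}(\A))\wo\partial_p(\A)$ must contain $i$ but not $j$, and one injects the family of such $p$-sets into the $p$-sets of $\partial_p(\A)$ that contain $j$ but not $i$ and are destroyed by the compression. For $i,j$ in the same class this goes through essentially verbatim. For $i,j$ in different classes the bookkeeping has to respect legality: $C_{ij}$ acts only on sets free of color $i$, so whether a $p$-set $B$ or the set $(B\wo\{i\})\cup\{j\}$ is legal and present now depends on that color, and the naive injection can break precisely at those sets. A clean way to contain this is to reach a shifted family in two alternating phases --- first compress within each color class, forcing every color onto its least available representatives, and only then perform the cross-color compressions, iterating the two phases until stable --- because after a within-class phase the cross-color comparisons reduce to comparisons of whole color classes, which can be controlled by applying the \emph{ordinary} Kruskal--Katona theorem to the ``pattern'' family $\setof{S\in\binom{[r]}{k}}{S\text{ is the color set of some }A\in\A}$. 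There is also a slicker but less elementary route through Kalai's algebraic shifting: a generic change of coordinates in the exterior algebra adapted to the color grading carries $\A$ to a combinatorially shifted family of the same size, and shifted families have minimal shadows. I would write up the compression version, since it is self-contained, but either way the crux is the colored analogue of ``shifting does not increase the shadow''.
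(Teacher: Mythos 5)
First, a point of order: the paper does not prove Theorem~\ref{th:FFK} at all --- it is quoted, with attribution, from Frankl, F\"uredi, and Kalai \cite{FranklFurediKalai88}. So there is no in-paper argument to compare against; what you have written is an attempt to reprove a cited external result, and it has to be judged on its own.

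Judged that way, there is a genuine gap, and it sits exactly where you say the ``real work'' is: the cross-color compression lemma is never established, and the two devices you offer to rescue it do not close the hole. The pattern-family idea --- applying ordinary Kruskal--Katona to $\setof{S\in\binom{[r]}{k}}{S\text{ is the color set of some }A\in\A}$ --- discards multiplicities: many members of $\A$ share a color pattern, and neither $\abs{\A}$ nor $\abs{\partial_p(\A)}$ is recoverable from that family, so ``cross-color comparisons reduce to comparisons of whole color classes'' is not a statement that controls shadows. The two-phase scheme has a further problem: cross-color compressions can destroy within-class compressedness and vice versa, so even termination at a well-defined ``shifted'' object, let alone the claim that this object is (or dominates) the $r$-partite colex initial segment, needs an argument you have not given. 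Finally, the last step --- ``a standard numerical computation with the combinatorial number systems underlying these initial segments'' --- is precisely the colored analogue of the numerical half of Kruskal--Katona; in the $r$-partite setting the relevant cascade representation and its shadow identity are part of what \cite{FranklFurediKalai88} had to prove, not something that ``falls out.'' The deletion/link decomposition and the observation that $\A_1$ lives on $r-1$ colors are sound and are the right skeleton, but as written the proposal is an outline whose central lemma and final computation are both asserted rather than proved. If you want a self-contained proof, you will need either to prove the cross-color compression inequality honestly (with the legality bookkeeping you correctly flag as the obstruction) or to follow the original route of \cite{FranklFurediKalai88}, which organizes the induction differently precisely to avoid this difficulty.
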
 

In the next corollary and throughout the rest of the paper we write $\k_t(G)$ for $N_{K_t}(G)$.  Also, given $v \in V(G)$ and $e \in E(G)$, let $\k_s(v)$ and $\k_s(e)$ denote the number of copies of~$K_s$ in $G$ that contain $v$ and the number of copies of~$K_s$ in $G$ that contain $e$, respectively.
 
\begin{corollary}\label{cor:FFK}
    If $G$ is an $r$-partite graph with $m$ edges then 
    \[
        \k_t(G) \le \k_t(\CT_r(m)).
    \]
\end{corollary}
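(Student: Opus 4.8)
The plan is to derive this from the colored Kruskal--Katona theorem (Theorem~\ref{th:FFK}), applied to the family of copies of $K_t$ in $G$; assume $t \ge 3$, the case $t = 2$ being trivial since then both sides equal $m$. First I would label the vertices of $G$ so that $V(G) \of [n]$ for some sufficiently large $n$, and let $\A \of \binom{[n]}{t}$ be the collection of vertex sets of copies of $K_t$ in $G$, so that $\abs{\A} = \k_t(G)$. Since $G$ is $r$-partite, its $r$-partition restricts to one of $\bigcup \A$ meeting every member of $\A$ in at most one vertex, so $\A$ is $r$-colorable in the sense of Theorem~\ref{th:FFK}. The elementary observation that drives the argument is that $\partial_2(\A) \of E(G)$: every pair of vertices contained in a copy of $K_t$ in $G$ is an edge of $G$. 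Hence $\abs{\partial_2(\A)} \le m$.

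Now I would apply Theorem~\ref{th:FFK} with $k = t$ and $p = 2$. Let $\C$ be the initial segment of $\binom{[n]}{t}$ in the $r$-partite colex order with $\abs{\C} = \abs{\A}$. The theorem yields $\abs{\partial_2(\C)} \le \abs{\partial_2(\A)} \le m$ and, moreover, that $\partial_2(\C)$ is itself an initial segment of $\binom{[n]}{2}$ in the $r$-partite colex order. An initial segment of that order of size at most $m$ is contained in the set of the first $m$ edges of the $r$-partite colex order, which by definition is $E(\CT_r(m))$; thus $\partial_2(\C) \of E(\CT_r(m))$. Finally, each $A \in \C$ is, by definition of the $r$-partite colex order, a $t$-set whose elements lie in pairwise distinct residue classes modulo $r$, that is, in pairwise distinct color classes of $\CT_r(m)$; and every pair $\set{i,j} \of A$ lies in $\partial_2(\C) \of E(\CT_r(m))$. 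So $A$ spans a copy of $K_t$ in $\CT_r(m)$. Therefore $\C$ is a family of copies of $K_t$ in $\CT_r(m)$, and $\k_t(\CT_r(m)) \ge \abs{\C} = \abs{\A} = \k_t(G)$, as desired.

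Since this is a reasonably direct application of Theorem~\ref{th:FFK}, I do not anticipate a genuine obstacle; the work is in the bookkeeping. One must take $n$ large enough that $\C$ is well defined and that $\CT_r(m)$ has enough vertices to accommodate the members of $\C$, and one must notice that the residue-class coloring built into the $r$-partite colex order coincides with the color classes of $\CT_r(m)$ (which it does, by definition). The step I would write out most carefully is the transition from ``$\partial_2(\C)$ is an initial segment of size at most $m$'' to ``$\partial_2(\C) \of E(\CT_r(m))$, and hence the members of $\C$ are genuine copies of $K_t$ in $\CT_r(m)$'', even though it is immediate once the definitions are unwound. The remaining degenerate case causes no trouble: if $t > r$, then neither $G$ nor $\CT_r(m)$ contains $K_t$, so both sides are $0$.
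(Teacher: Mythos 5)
Your proof is correct and is exactly the argument the paper has in mind: the paper dismisses this corollary as an immediate consequence of Theorem~\ref{th:FFK} and the definition of $\CT_r(m)$, and you have simply (and accurately) written out the bookkeeping --- the $r$-colorability of the family of $t$-cliques, the containment $\partial_2(\A)\of E(G)$, and the fact that the initial-segment shadow of $\C$ sits inside $E(\CT_r(m))$ so that $\C$ consists of cliques of $\CT_r(m)$.
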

\begin{proof}
    This is an immediate consequence of Theorem~\ref{th:FFK} and the definition of $\CT_r(m)$.
\end{proof}
\begin{figure}[ht]
	\begin{center}
		\begin{tikzpicture}[scale=0.7]
			\foreach \i in {0,1,2}
			{
				\foreach \j [evaluate=\j as \n using {int(3*\j+\i+1)}] in {0,1,2,3}
				{
					\path (90-\i*120:1.2*\j+2) node[draw, circle, inner sep=0pt, minimum size=0.6cm] (\n) {$\n$};
				}
			}
			\foreach \i in {0,1,2}
				\draw[very thick, loosely dotted] (90-\i*120:1.2*3.5+2) -- (90-\i*120:1.2*4.5+2);
			\foreach \n in {1,...,8}
				\foreach \m in {1,...,7}
				{
					\pgfmathparse{int(mod(\n,3)-mod(\m,3))} \let\z\pgfmathresult
					\ifthenelse{\z=0}{}{\draw (\n) -- (\m);}
				}
			\def\n{9}
			\foreach \m in {1,...,6}
			{
				\pgfmathparse{int(mod(\n,3)-mod(\m,3))} \let\z\pgfmathresult
				\ifthenelse{\z=0}{}{\draw (\n) -- (\m);}
			}
		\end{tikzpicture}
	\end{center}
	\caption{The graph $CT_3(25)$. It contains the $21=t_3(8)$ edges of $T_3(8)$, as well as the edges $19$, $29$, $49$, and $59$.}
    \label{fig:CT}
\end{figure}
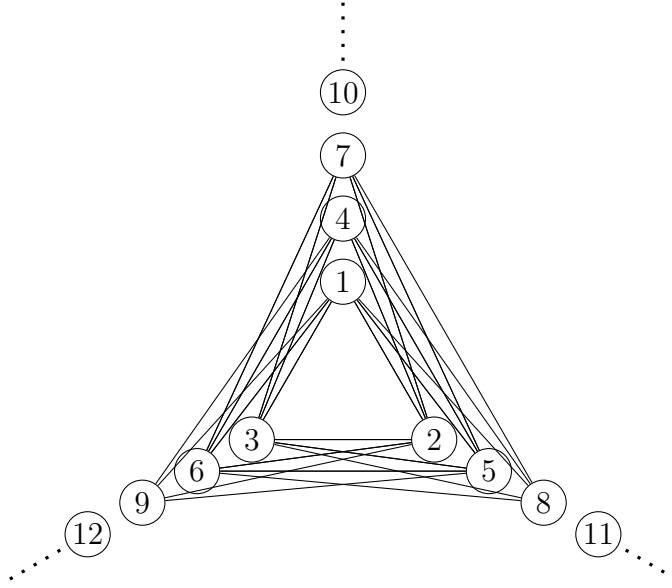

From these results we can prove that $\CT_r(m)$ achieves the maximum in the definition of $\mex_{K_t}(m, K_{r+1})$.

\begin{theorem}[Frohmader~\cite{Frohmader08}]\label{th:Froh}
    For all $r\ge t\ge 2$ we have
    \[
        \mex_{K_t}(m, K_{r+1}) = \k_t(\CT_r(m)).
    \]
\end{theorem}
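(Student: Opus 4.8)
The plan is to prove the two inequalities separately. The lower bound $\mex_{K_t}(m,K_{r+1}) \ge \k_t(\CT_r(m))$ is immediate: in $\CT_r(m)$ adjacent vertices lie in distinct residue classes mod $r$, so $\CT_r(m)$ is $r$-partite, hence $K_{r+1}$-free, and it has exactly $m$ edges.

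For the upper bound, fix a $K_{r+1}$-free graph $G$ with $m$ edges; the goal is $\k_t(G) \le \k_t(\CT_r(m))$. If $G$ contains no copy of $K_t$ there is nothing to prove, so assume $\k_t(G) > 0$. Let $\Delta$ be the clique complex of $G$. Since the faces of $\Delta$ are the cliques of $G$ and $G$ is $K_{r+1}$-free, every face has at most $r$ vertices, so $d := \dim\Delta + 1 \le r$; also $d \ge t$ because $G$ has a $t$-clique; and $\Delta$ is a flag complex by construction. Now apply Frohmader's theorem (the Kalai--Eckhoff conjecture): there is a balanced complex $\Gamma$ with the same $f$-vector as $\Delta$. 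Being balanced with $\dim\Gamma = \dim\Delta = d-1$, the complex $\Gamma$ is $d$-colorable, hence (adding empty color classes) $r$-colorable; and since it has the same $f$-vector as $\Delta$ it satisfies $f_1(\Gamma) = e(G) = m$ and $f_{t-1}(\Gamma) = \k_t(G)$.

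The last step transports this back to graphs so that Corollary~\ref{cor:FFK} can be applied. Let $H$ be the graph on $V(\Gamma)$ whose edges are the $1$-dimensional faces of $\Gamma$. An $r$-coloring of $\Gamma$ is a proper $r$-coloring of $H$, so $H$ is $r$-partite, and $e(H) = f_1(\Gamma) = m$. Moreover, since $\Gamma$ is downward closed, every $(t-1)$-dimensional face of $\Gamma$ is a $t$-set all of whose pairs are edges of $H$, i.e.\ a copy of $K_t$ in $H$; hence $\k_t(H) \ge f_{t-1}(\Gamma) = \k_t(G)$. Corollary~\ref{cor:FFK} gives $\k_t(H) \le \k_t(\CT_r(m))$, and chaining the inequalities yields $\k_t(G) \le \k_t(\CT_r(m))$, completing the proof.

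As for where the difficulty lies: essentially all of it is concentrated in the Kalai--Eckhoff step, which is exactly Frohmader's theorem and which we are entitled to quote. The point is that a $K_{r+1}$-free graph need not be $r$-partite (for instance $C_5$ is triangle-free but not bipartite), so Corollary~\ref{cor:FFK} cannot be applied to $G$ directly; the Kalai--Eckhoff step replaces the clique complex of $G$ by a genuinely $r$-colorable complex with identical face numbers, after which everything is bookkeeping. The only subtlety worth flagging is the handling of degenerate cases — $G$ having no $K_t$, or $\dim\Delta < t-1$ so that $f_{t-1}$ is absent — which is why the case $\k_t(G)=0$ is disposed of at the outset.
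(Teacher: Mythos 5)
Your proof is correct and follows essentially the same route as the paper: pass to the clique complex, invoke Frohmader's resolution of the Kalai--Eckhoff conjecture to get an $r$-colorable complex with the same $f$-vector, and finish with the colored Kruskal--Katona bound (Corollary~\ref{cor:FFK}). You are somewhat more careful than the paper's own two-line argument in explicitly building the $r$-partite graph $H$ from the $1$-skeleton of $\Gamma$ and in adding the easy lower bound, but this is the same proof.
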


\begin{proof}
    Consider a $K_{r+1}$-free graph $G$ having $m$ edges. Its clique complex $K$ is a flag complex, so, since the Kalai--Eckhoff conjecture is true, there is a balanced complex $\Gamma$ having the same $f$-vector as~$K$. Since $\dim \Gamma = \dim K \le r-1$ we know that $\Gamma$ is $r$-colorable. By Corollary~\ref{cor:FFK} we have $\k_t(G)\le \k_t(\CT_r(m))$.
\end{proof}


\subsection{Structural supersaturation and stability} 
\label{sub:structural_supersaturation_supersaturation_and_stability}

Tur\'an's theorem has inspired a great deal of research on the size and structure of extremal and near-extremal $F$-free graphs.  We mention several important theorems in this area in order to motivate our results.

Given a graph~$G$ and a positive integer~$t$, we let $G[t]$ denote the $t$-fold blowup of~$G$ (in which every vertex of $G$ is replaced by an independent set of size~$t$ and every edge by a copy of $K_{t, t}$). Erd\H{o}s and Stone~\cite{ErdosStone46} showed that a graph with $n$ vertices and $t_r(n) + \e n^2$ edges not only contains  $K_{r+1}$, but contains a sizable blow-up of~$K_{r+1}$.

\begin{theorem}\label{th:ErdosStone}
For all $r \geq 2$, $t \geq 1$, and $\e > 0$, there exists $n_0$ such that if $n \geq n_0$ and $G$ is a graph with $n$ vertices such that
	\[
		e(G) \ge t_r(n) + \e n^2,
	\]
then $G$ contains $K_{r+1}[t]$.
\end{theorem}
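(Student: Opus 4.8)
The plan is to prove the theorem by induction on $r$, establishing it for all $t$ and all $\e > 0$ at once; it is convenient to include the (trivially stated) case $r = 1$ in the induction. Equivalently, one is proving $\ex(n, K_{r+1}[t]) = t_r(n) + o(n^2)$, where the lower bound is immediate from Tur\'an's theorem since $K_{r+1} \of K_{r+1}[t]$, so only the upper bound is at stake. Throughout we may assume $\e$ is as small as we please, since decreasing $\e$ only weakens the hypothesis. The base case $r = 1$ is the K\H{o}v\'ari--S\'os--Tur\'an theorem: $\ex(n, K_{t,t}) = O(n^{2 - 1/t}) = o(n^2)$, so once $n$ is large, $\e n^2$ edges already force a copy of $K_{t,t} = K_2[t]$ (alternatively one argues directly, bounding the number of stars $K_{1,t}$ from below by convexity and then applying pigeonhole to their leaf $t$-sets).

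For the inductive step I would fix $r \ge 2$, assume the claim for $r - 1$, and take $G$ with $n$ large and $e(G) \ge t_r(n) + \e n^2$. The first move is a minimum-degree reduction: since $e(G) \ge (1 - \tfrac1r + \e)\binom{n}{2}$ for $n$ large, repeatedly deleting a vertex whose current degree lies below $(1 - \tfrac1r + \tfrac\e2)$ times the current order cannot destroy all of the edges --- a short summation shows more than $\tfrac\e2 \binom{n}{2}$ survive --- so one is left with a subgraph $G'$ on $n' = \Omega(n)$ vertices with $\delta(G') \ge (1 - \tfrac1r + \tfrac\e2)n'$. Then $e(G') \ge \tfrac12(1 - \tfrac1r + \tfrac\e2){n'}^2 > t_{r-1}(n') + \tfrac\e4 {n'}^2$, so the induction hypothesis --- applied with a blow-up parameter $s'$ that I would fix in a moment --- produces a copy $H$ of $K_r[s']$ inside $G'$, with parts $V_1, \dots, V_r$ of size $s'$ each.

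The payoff of the reduction is that every transversal clique of $H$ has linearly many common neighbours in $G'$: for each $(a_1, \dots, a_r) \in V_1 \times \dots \times V_r$ the set $N_{G'}(a_1) \cap \dots \cap N_{G'}(a_r)$ has size at least $n' - r\bigl(n' - \delta(G')\bigr) \ge \tfrac{r\e}{2} n'$. I would then double-count $\sum_{u} \prod_{i=1}^{r} \abs{N_{G'}(u) \cap V_i}$ over $u \in V(G') \wo V(H)$: from below this sum is (essentially) at least $(s')^r \cdot \tfrac{r\e}{2} n'$, while a vertex $u$ with fewer than $t$ neighbours in some $V_i$ contributes at most $t(s')^{r-1}$, so the set $Z$ of vertices $u \notin V(H)$ having at least $t$ neighbours in every $V_i$ must satisfy $\abs{Z} \ge n'\bigl(\tfrac{r\e}{4} - t/s'\bigr)$. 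Now I would choose $s' > 8t/(r\e)$, making $\abs{Z} \ge \tfrac{r\e}{8} n'$, which for $n$ large exceeds $(t - 1)\binom{s'}{t}^{r}$; assigning to each $u \in Z$ a $t$-subset of $N_{G'}(u) \cap V_i$ inside each part $V_i$ and pigeonholing over the resulting tuples, I obtain $t$ vertices $u_1, \dots, u_t \in Z$ sharing one tuple $(A_1, \dots, A_r)$. Then $\set{u_1, \dots, u_t}, A_1, \dots, A_r$ are the $r + 1$ parts of a copy of $K_{r+1}[t]$ in $G' \of G$, completing the step.

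The density step is the one I expect to be the real obstacle. The weaker hypothesis $e(G) > t_{r-1}(n) + \e n^2$ would already give a $K_r[s']$ by induction, but it says nothing about how the remainder of $G$ attaches to that copy, and a single $K_r[s']$ need not extend to $K_{r+1}[t]$; it is the full strength $e(G) \ge t_r(n) + \e n^2$, channeled through the minimum-degree reduction, that forces the transversal cliques of $H$ to have $\Omega(n)$ common neighbours --- exactly the resource the counting consumes. The only remaining subtlety is the order of the parameters: $s'$ must be fixed large relative to $t/\e$, and only then may $n$ be taken large relative to $s'$, so that $\tfrac{r\e}{8} n' > (t-1)\binom{s'}{t}^{r}$ and all the lower-order error terms are absorbed. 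As a cross-check on the approach, an alternative is to prove supersaturation first --- averaging the edge count over random $q$-vertex subsets together with Tur\'an's theorem yields $\Omega(n^{r+1})$ copies of $K_{r+1}$ in $G$ --- and then apply the hypergraph K\H{o}v\'ari--S\'os--Tur\'an theorem to the $(r+1)$-uniform hypergraph formed by those cliques; a complete $(r+1)$-partite subhypergraph there is exactly a $K_{r+1}[t]$ in $G$. That works as well, but it merely relocates the main difficulty into the cited hypergraph estimate.
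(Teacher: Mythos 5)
Your proof is correct. Note that the paper does not prove this statement at all: Theorem~\ref{th:ErdosStone} is the classical Erd\H{o}s--Stone theorem, quoted from \cite{ErdosStone46} purely as motivation and as an ingredient elsewhere. What you have written is the standard modern argument (induction on $r$ with K\H{o}v\'ari--S\'os--Tur\'an as the base case, a minimum-degree reduction, extraction of a $K_r[s']$ with $s' \gg t/\e$, and a double-count plus pigeonhole over $t$-subsets of the parts to attach the $(r+1)$st class), and the details --- the common-neighbourhood bound $\tfrac{r\e}{2}n'$, the choice $s' > 8t/(r\e)$, and the order of quantifiers ($s'$ fixed before $n$ is taken large) --- all check out.
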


Erd\H{o}s and Simonovits~\cite{ErdosSimonovits66} observed that Theorem~\ref{th:ErdosStone} implies that $\chi(F)$ determines $\ex(n, F)$ up to a $o(n^2)$ error term.

\begin{theorem}\label{th:ErdosSimonovits}
Let $F$ be a graph.  We have
	\[
		\ex(n, F) = \parens[\bigg]{\frac{\chi(F)-2}{\chi(F)-1}+o(1)} \binom{n}{2}.
	\]
\end{theorem}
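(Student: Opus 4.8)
The plan is to deduce this from the Erd\H{o}s--Stone theorem (Theorem~\ref{th:ErdosStone}), following the original observation of Erd\H{o}s and Simonovits. Throughout, set $r = \chi(F) - 1$, so that $F$ is $(r+1)$-chromatic, and recall the elementary estimate $t_r(n) = \parens[\big]{\tfrac{r-1}{r} + o(1)}\binom n2 = \parens[\big]{\tfrac{\chi(F)-2}{\chi(F)-1} + o(1)}\binom n2$. The lower bound is then immediate: the Tur\'an graph $T_r(n)$ is $r$-colorable, hence so is every subgraph of it, and since $\chi(F) = r+1 > r$ it follows that $T_r(n)$ is $F$-free; therefore $\ex(n,F) \ge t_r(n)$, which supplies the claimed asymptotic lower bound.

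For the upper bound, the one thing to check is that $F$ sits inside a blow-up of $K_{r+1}$: fixing a proper $(r+1)$-coloring of $F$ and putting $t = \abs{V(F)}$, the color classes realize $F$ as a subgraph of the complete $(r+1)$-partite graph with all parts of size $t$, which is exactly $K_{r+1}[t]$. Now fix $\e > 0$ and apply Theorem~\ref{th:ErdosStone} with these values of $r$ and $t$ and with error parameter $\e/3$ in place of $\e$; this produces $n_0$ such that every graph on $n \ge n_0$ vertices with at least $t_r(n) + (\e/3)n^2$ edges contains $K_{r+1}[t]$, and hence $F$. Thus any $F$-free graph on $n \ge n_0$ vertices has at most $t_r(n) + (\e/3)n^2$ edges, and since $t_r(n) \le \tfrac{r-1}{r}\cdot\tfrac{n^2}{2}$ this is at most $\parens[\big]{\tfrac{r-1}{r} + \e}\binom n2$ once $n$ is sufficiently large. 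As $\e > 0$ was arbitrary, combining this with the lower bound gives the theorem.

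Because the substantive input --- Theorem~\ref{th:ErdosStone} --- is quoted, there is no genuine obstacle here; the only points requiring (routine) care are the embedding $F \of K_{r+1}[t]$, which is immediate from any proper coloring, and the arithmetic absorbing the lower-order discrepancy $t_r(n) - \tfrac{r-1}{r}\binom n2 = O(n)$ into the $o(1)$ term.
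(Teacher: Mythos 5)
Your deduction is correct and is precisely the argument the paper alludes to (Theorem~\ref{th:ErdosSimonovits} is quoted from Erd\H{o}s--Simonovits without proof, with the remark that it follows from Theorem~\ref{th:ErdosStone}): the lower bound comes from the $F$-free Tur\'an graph $T_{\chi(F)-1}(n)$, and the upper bound from the embedding $F \of K_{r+1}[\abs{V(F)}]$ together with Erd\H{o}s--Stone. The one caveat is that for $\chi(F) = 2$ you would need Theorem~\ref{th:ErdosStone} with $r = 1$, which its statement excludes, so the bipartite case of the claim ($\ex(n,F) = o(n^2)$) needs the K\H{o}v\'ari--S\'os--Tur\'an theorem as a separate input; for $\chi(F) \geq 3$ your proof is complete as written.
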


Erd\H{o}s and Simonovits~\cite{Erdos67,Simonovits68} also proved a stability result, which says that a $K_{r+1}$-free graph with nearly the extremal number of edges has nearly extremal structure.

\begin{theorem}\label{th:stability}
For all $r \geq 2$ and all $\e > 0$, there exist $n_0$ and $\d > 0$ such that if $n \geq n_0$ and $G$ is a $K_{r+1}$-free graph with $n$ vertices such that
	\[
		e(G) \geq (1 - \d)t_r(n),
	\]
then $G$ can be made $r$-partite by deleting at most~$\epsilon n^2$ edges.
\end{theorem}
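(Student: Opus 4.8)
The plan is to argue by induction on $r$, reducing $K_{r+1}$-free graphs to $K_r$-free graphs by passing to a vertex neighbourhood. The base case $r=1$ is trivial, since a $K_2$-free graph is edgeless and $t_1(n)=0$. For the inductive step, first ``clean'' the graph: given a $K_{r+1}$-free $G$ with $e(G)\ge(1-\delta)t_r(n)$, repeatedly delete any vertex whose degree in the current graph is less than $(1-\tfrac1r-\sqrt\delta)N$, where $N$ is the current number of vertices. Such a deletion destroys fewer than $(1-\tfrac1r-\sqrt\delta)N$ edges while $t_r(N)$ drops by roughly $(1-\tfrac1r)N$, so the deficit $t_r(N)-e$ strictly decreases by $\Omega(\sqrt\delta\,n)$ at each step; since it starts below $\delta t_r(n)$ and, by Tur\'an's theorem, is never negative, at most $O(\sqrt\delta\,n)$ vertices get deleted. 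We are left with a $K_{r+1}$-free graph $G'$ on $n'=(1-O(\sqrt\delta))n$ vertices, with minimum degree at least $(1-\tfrac1r-\sqrt\delta)n'$ and $e(G')\ge(1-O(\delta))t_r(n')$. It then suffices to make $G'$ $r$-partite by deleting at most $\tfrac{\epsilon}{2}n^2$ edges, as reinserting the deleted vertices and assigning them arbitrarily adds only $O(\sqrt\delta)n^2<\tfrac{\epsilon}{2}n^2$ further bad edges.

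Next, let $v$ be a vertex of $G'$ of maximum degree $\Delta=|N(v)|\ge(1-\tfrac1r-\sqrt\delta)n'$. The induced subgraph $G'[N(v)]$ is automatically $K_r$-free, and the crucial point is that it is also almost edge-extremal. Each $x\in N(v)$ has at most $n'-\Delta$ neighbours outside $N(v)$, so $\deg_{G'[N(v)]}(x)\ge(1-\tfrac1r-\sqrt\delta)n'-(n'-\Delta)=\Delta-(\tfrac1r+\sqrt\delta)n'$; summing over the $\Delta$ vertices of $N(v)$ gives $e(G'[N(v)])\ge\tfrac12\Delta\bigl(\Delta-(\tfrac1r+\sqrt\delta)n'\bigr)$, and since $n'/\Delta\le\tfrac{r}{r-1}(1+O(\sqrt\delta))$ this is at least $(1-O(\sqrt\delta))\,t_{r-1}(\Delta)$. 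Choosing $\delta$ small and $n$ large, the induction hypothesis applies to $G'[N(v)]$ and produces a partition $N(v)=V_1\cup\dots\cup V_{r-1}$ with at most $\epsilon_1 n^2$ edges inside the parts, where $\epsilon_1>0$ may be taken as small as we please; moreover, since $e(G'[N(v)])$ is within $o(n^2)$ of $t_{r-1}(\Delta)$, the sum $\sum_{i<j}|V_i|\,|V_j|$ is within $o(n^2)$ of its maximum, which forces $|V_i|=\tfrac{\Delta}{r-1}\pm o(n)$ for every $i$.

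It remains to extend $V_1,\dots,V_{r-1}$ to an $r$-partition of $V(G')$, and this is the main obstacle. Write $W=V(G')\setminus N(v)$, of size at most $(\tfrac1r+\sqrt\delta)n'$. The near-extremal edge count forces all but $o(n^2)$ of the pairs between distinct $V_i$ to be edges, and one wants to deduce that all but $o(n)$ vertices of $G'$ have essentially all possible neighbours in all but exactly one of the classes $V_1,\dots,V_{r-1}$, $\{v\}$ (suitably extended); each such vertex is assigned to its exceptional class, and the few remaining vertices are dropped anywhere at a cost of $o(n^2)$ edges. The mechanism is that any obstruction to this structure --- an edge inside a class, or a vertex with many neighbours in two classes --- can be completed into a copy of $K_{r+1}$ by adjoining a transversal clique chosen greedily from the other, nearly complete, classes, contradicting $K_{r+1}$-freeness. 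The delicate point is that the minimum-degree bound alone does not force this rigidity (the Tur\'an graph is not determined by its minimum degree), so the near-extremality of $e(G')$ must be exploited globally; turning the sketch above into a quantitative argument, with a hierarchy $\epsilon_1\ll\eta\ll\epsilon$ of parameters and $n$ large, is the technical heart of the proof. One may alternatively route everything through Szemer\'edi's regularity lemma: the reduced graph is $K_{r+1}$-free with nearly the Tur\'an number of edges (a $K_{r+1}$ in it would lift to $G'$ via the counting lemma), the argument above applies to it, and its $r$-partition lifts back to the desired one for $G'$.
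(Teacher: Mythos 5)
This is Theorem~\ref{th:stability}, which the paper quotes from Erd\H{o}s and Simonovits without proof, so your attempt can only be judged on its own terms. Your overall route --- clean up to a minimum-degree condition, pass to the neighbourhood of a max-degree vertex, and induct on $r$ --- is a legitimate classical approach, and your first two stages are essentially sound: the deletion process does terminate after $O(\sqrt{\delta}\,n)$ vertices by the deficit argument, and the computation showing $e(G'[N(v)])\ge(1-O(\sqrt{\delta}))t_{r-1}(\Delta)$ is correct (the identity $\frac{1-2/r}{1-1/r}=\frac{r-2}{r-1}$ makes the constants work out).

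However, the third stage is a genuine gap, and you say so yourself: ``turning the sketch above into a quantitative argument \dots is the technical heart of the proof.'' That heart is missing, and it is precisely where the content of the stability theorem lives. Concretely, three things are not done. First, no assignment rule is given for the vertices of $W=V(G')\setminus N(v)$; the naive choice $V_r=W\cup\{v\}$ fails because $e(G'[W])$ can a priori be as large as $\binom{|W|}{2}=\Theta(n^2)$, and the edge count alone gives only a vacuous (negative) lower bound on the number of edges inside classes, so $K_{r+1}$-freeness must be invoked to kill edges inside $W$ --- which in turn requires reassigning each $w\in W$ to the class it ``misses.'' Second, the completion mechanism (``any edge inside a class extends to a $K_{r+1}$ via a transversal clique'') needs each endpoint to have large degree into \emph{every} other class; the minimum-degree bound only guarantees that a vertex misses at most about $n'/r$ vertices of $V_1\cup\dots\cup V_{r-1}$, i.e.\ it may miss an entire class, so one must first excise the $o(n)$ atypical vertices and prove a counting statement (that near-complete classes admit transversal cliques through any prescribed pair), none of which is written down. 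Third, the hierarchy $\epsilon_1\ll\eta\ll\epsilon$ is named but never instantiated, so it cannot be checked that the losses at each stage actually sum to less than $\epsilon n^2$. The alternative regularity-lemma route you mention would work but is likewise only gestured at. As it stands this is a correct plan with a correct opening, not a proof.
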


\subsection{Results} 
\label{sub:results}

In Section~\ref{sec:SSS} we prove analogues of the Erd\H{o}s--Stone theorem (Theorem~\ref{th:ErdosStone}), the Erd\H{o}s--Simonovits theorem (Theorem~\ref{th:ErdosSimonovits}), and the Erd\H{o}s--Simonovits stability theorem (Theorem~\ref{th:stability}) in the context of Frohmader's theorem, Theorem~\ref{th:Froh}. To be precise we prove the following results.

\begin{theorem}\label{th:mexErdosStone}
For all $r, s \geq 3$, all $t \geq 1$, and all $\e > 0$, there exists $m_0$ such that if $m \geq m_0$ and $G$ is a graph with $m$ edges such that
	\[
		\k_s(G) \ge \mex_{K_s}(m,K_{r+1}) + \e m^{s/2},
	\]
then $G$ contains $K_{r+1}[t]$.
\end{theorem}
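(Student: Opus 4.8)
The plan follows the regularity-method proof of the Erd\H{o}s--Stone theorem (Theorem~\ref{th:ErdosStone}), adapted to the edge-counting regime. Given $G$ with $m$ edges and $\k_s(G)\ge\mex_{K_s}(m,K_{r+1})+\e m^{s/2}$, I would reduce $G$ to a well-structured subgraph and then reach a dichotomy: either the structure forces a copy of $K_{r+1}[t]$ in $G$ (the desired conclusion), or the structure is $K_{r+1}$-\emph{free} with at most $m$ edges, which is impossible because Frohmader's Theorem~\ref{th:Froh} would then cap $\k_s$ below what we have. The conceptual heart is that, once Szemer\'edi's regularity lemma has ``purified'' $G$, the $K_{r+1}$-free alternative is controlled by the \emph{exact} result Theorem~\ref{th:Froh}, so no new extremal inequality is needed. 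Recall that by (the corollary of) Theorem~\ref{thm:KK} every graph with $m$ edges has $\k_s(G)\le\k_s(C(m))=O(m^{s/2})$, so the slack $\e m^{s/2}$ is a fixed proportion of $\k_s(G)$; we will spend it in three pieces of sizes $\tfrac{\e}{4}m^{s/2}$, $\tfrac{\e}{4}m^{s/2}$, and $\tfrac{\e}{2}m^{s/2}$.

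First I would pass to a subgraph with $O(\sqrt m)$ vertices and large minimum degree. Fix a small constant $\delta=\delta(\e,s)>0$ and iteratively delete from $G$ any vertex of current degree at most $\delta\sqrt m$, obtaining $G^*$ with $\delta(G^*)>\delta\sqrt m$ (nonempty once $m$ is large, since $\k_s(G)>0$). To bound the copies of $K_s$ lost, charge each destroyed copy to the first of its $s$ vertices to be deleted: at that moment the other $s-1$ vertices are still present and form a clique in the current neighbourhood of that vertex $v$, so the copies charged to $v$ number at most $\binom{\deg'(v)}{s-1}$, where $\deg'(v)\le\delta\sqrt m$ is $v$'s current degree. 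Using $\binom{d}{s-1}\le\tfrac{(\delta\sqrt m)^{s-2}}{(s-1)!}\,d$ for $0\le d\le\delta\sqrt m$ together with $\sum_{v\text{ deleted}}\deg'(v)\le m$ (each edge is counted at most once, when its first endpoint is deleted), the total loss is at most $\tfrac{\delta^{s-2}}{(s-1)!}m^{s/2}$, which is below $\tfrac{\e}{4}m^{s/2}$ for $\delta$ small. This is the one place $s\ge3$ is essential: the exponent $s-2$ must be positive so that $\delta^{s-2}\to0$. Finally $\delta(G^*)>\delta\sqrt m$ and $e(G^*)\le m$ force $|V(G^*)|<2\sqrt m/\delta$, so $G^*$ has $O_{\e,s}(\sqrt m)$ vertices, while $\k_s(G^*)\ge\mex_{K_s}(m,K_{r+1})+\tfrac{3\e}{4}m^{s/2}$.

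Next I would apply Szemer\'edi's regularity lemma to $G^*$, with regularity parameter, density cutoff $d_0$, and lower bound on the number of parts all chosen, in that order and after $\delta$, fine/small/large enough for the estimate below. Let $G^{**}$ be obtained from $G^*$ by deleting every edge lying inside a part, inside an irregular pair, inside a pair of density below $d_0$, or meeting the exceptional set, and let $R$ be the reduced graph (parts as vertices, adjacent when the pair is regular of density at least $d_0$). Because $|V(G^*)|=O(\sqrt m)$, each of these four classes destroys at most $O\!\big(|V(G^*)|^s\big)=O(m^{s/2})$ copies of $K_s$ with a constant we can make as small as we like, so the total loss is at most $\tfrac{\e}{4}m^{s/2}$ and hence $\k_s(G^{**})\ge\mex_{K_s}(m,K_{r+1})+\tfrac{\e}{2}m^{s/2}$. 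Now the dichotomy. If $R$ contains $K_{r+1}$, the corresponding $r+1$ parts are pairwise regular of density at least $d_0$ and of size tending to infinity with $m$, so the Embedding (Key) Lemma of the regularity method embeds a copy of $K_{r+1}[t]$ into $G^{**}\subseteq G$, which is the desired conclusion. If instead $R$ is $K_{r+1}$-free, then $G^{**}$ has edges only between $R$-adjacent parts, so it is a subgraph of a blow-up of $R$ and is therefore $K_{r+1}$-free; since $e(G^{**})\le m$, Theorem~\ref{th:Froh} gives $\k_s(G^{**})\le\mex_{K_s}(e(G^{**}),K_{r+1})\le\mex_{K_s}(m,K_{r+1})$ (the last step by the obvious monotonicity of $\mex_{K_s}(\,\cdot\,,K_{r+1})$ --- pad with pendant edges), contradicting the lower bound just obtained, so this case does not occur. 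Either way $G$ contains $K_{r+1}[t]$.

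The main obstacle I anticipate is not a single hard step but the bookkeeping: one must fix the constants in the right dependency order ($\delta$, then the regularity parameters and $d_0$, then the threshold $m_0$) and check that each of the two batches of discarded copies of $K_s$ --- those lost to the minimum-degree core and those lost to regularity cleaning --- stays below $\tfrac{\e}{4}m^{s/2}$, while simultaneously the regularity parts remain large enough for the Embedding Lemma to produce $K_{r+1}[t]$ (this is fine, since the core guarantees $|V(G^*)|\ge\delta\sqrt m\to\infty$). The pleasant feature of this route is that the second branch of the dichotomy requires no new extremal estimate: regularity reduces the $K_{r+1}$-free case to the exact content of Theorem~\ref{th:Froh}. (The hypothesis $s\ge3$ is genuinely needed; for $s=2$ one has $\k_2(G)=m=\mex_{K_2}(m,K_{r+1})$ identically, so the hypothesis of the theorem is never met.)
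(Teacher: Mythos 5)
Your proof is correct. It shares the paper's pivotal idea --- first densify $G$ to a core on $\Theta(\sqrt m)$ vertices while sacrificing only a controlled fraction of the $\e m^{s/2}$ surplus, so that $o(n^2)$ edge removals cost only $o(m^{s/2})$ copies of $K_s$, and then play the remaining surplus off against the exact bound of Theorem~\ref{th:Froh} --- but both main steps are executed differently. For the densification, you delete low-degree \emph{vertices} and bound the loss by a direct charging argument ($\sum_v \binom{\deg'(v)}{s-1} \le \frac{(\delta\sqrt m)^{s-2}}{(s-1)!}\sum_v \deg'(v) \le \frac{\delta^{s-2}}{(s-1)!}m^{s/2}$, using $s\ge 3$), whereas the paper deletes \emph{edges} $e$ with $\k_s(e)$ small and shows the process must stop before $\rho m$ deletions because otherwise the surviving graph would violate the Kruskal--Katona bound of Corollary~\ref{cor:LovaszKKcor}; your version is more elementary and self-contained, the paper's yields a minimum ``edge codegree'' condition $\d'_s(G')\gtrsim (m')^{(s-2)/2}$ from which the vertex bound $n'=O(\sqrt{m'})$ falls out. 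For the endgame, you apply Szemer\'edi's regularity lemma to the core and argue via the reduced-graph dichotomy (a $K_{r+1}$ in $R$ gives $K_{r+1}[t]$ by the embedding lemma; otherwise the cleaned graph is $K_{r+1}$-free and Frohmader caps $\k_s$, using the easy monotonicity of $\mex_{K_s}(\cdot,K_{r+1})$), whereas the paper black-boxes exactly this step as the Erd\H{o}s--Frankl--R\"odl theorem (Theorem~\ref{th:removal}) applied with $F=K_{r+1}[t]$, which has chromatic number $r+1$. Since that theorem is itself proved by your regularity argument, you have in effect inlined the proof of the paper's black box: the paper's route is shorter on the page, yours is self-contained modulo the regularity and embedding lemmas, and your constant bookkeeping (the $\tfrac{\e}{4}+\tfrac{\e}{4}+\tfrac{\e}{2}$ split and the dependency order $\delta$, then the regularity parameters, then $m_0$) is sound.
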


\begin{theorem}\label{th:mexErdosSimonovits}
Let $r, s \geq 3$, let $F$ be a graph, and let $\chi(F)=r+1$.  We have
	\[
		\mex_{K_s}(m,F) = \mex_{K_s}(m,K_{r+1}) + o(m^{s/2}).
	\]
\end{theorem}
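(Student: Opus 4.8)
The plan is to deduce Theorem~\ref{th:mexErdosSimonovits} from the Erd\H{o}s--Stone analogue (Theorem~\ref{th:mexErdosStone}) together with Frohmader's exact result (Theorem~\ref{th:Froh}), exactly mirroring how Erd\H{o}s and Simonovits derived Theorem~\ref{th:ErdosSimonovits} from Theorem~\ref{th:ErdosStone}. The lower bound is immediate: since $\chi(F) = r+1$, the complete $r$-partite graph $CT_r(m)$ is $F$-free (it contains no $K_{r+1}$, hence no copy of any graph with chromatic number $r+1$), so $\mex_{K_s}(m,F) \ge \k_s(CT_r(m)) = \mex_{K_s}(m,K_{r+1})$. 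Thus it remains to prove the matching upper bound up to an $o(m^{s/2})$ error term.

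For the upper bound, fix $\e > 0$ and take $t$ large enough that $K_{r+1}[t] \supseteq F$; this is possible because $\chi(F) = r+1$ means $F$ embeds into some balanced complete $(r+1)$-partite graph, which is a subgraph of $K_{r+1}[t]$ for $t = |V(F)|$. Now suppose $G$ is $F$-free with $m$ edges. Then $G$ does not contain $K_{r+1}[t]$ either. By Theorem~\ref{th:mexErdosStone} (applied with this $t$ and this $\e$), for all sufficiently large $m$ we must have
\[
    \k_s(G) < \mex_{K_s}(m, K_{r+1}) + \e m^{s/2}.
\]
Taking the maximum over all such $G$ gives $\mex_{K_s}(m,F) \le \mex_{K_s}(m,K_{r+1}) + \e m^{s/2}$ for $m \ge m_0(\e)$. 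Combined with the lower bound, this says $0 \le \mex_{K_s}(m,F) - \mex_{K_s}(m,K_{r+1}) \le \e m^{s/2}$ eventually, and since $\e$ was arbitrary this is precisely the assertion $\mex_{K_s}(m,F) = \mex_{K_s}(m,K_{r+1}) + o(m^{s/2})$.

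There is essentially no hard step here once Theorem~\ref{th:mexErdosStone} is in hand; the only points requiring a sentence of care are (i) verifying that $F$ with $\chi(F)=r+1$ embeds into a bounded blow-up of $K_{r+1}$, which is the standard observation that a proper $(r+1)$-colouring of $F$ exhibits it as a subgraph of $K_{r+1}[|V(F)|]$, and (ii) the bookkeeping that lets us absorb the quantifiers in the right order, i.e.\ fixing $\e$ first, then choosing $t = |V(F)|$, then invoking Theorem~\ref{th:mexErdosStone} to get $m_0$. The main conceptual work of the section — proving the supersaturation/blow-up statement in Theorem~\ref{th:mexErdosStone} itself, presumably via a weighted Kruskal--Katona or clique-density argument combined with a blow-up lemma in the edge-counting regime — is already done by the time we reach this theorem, so the present proof is a short corollary.
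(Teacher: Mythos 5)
Your proof is correct and is exactly the standard deduction the paper has in mind: the paper explicitly omits this proof, remarking that Theorem~\ref{th:mexErdosSimonovits} follows from Theorem~\ref{th:mexErdosStone} just as Theorem~\ref{th:ErdosSimonovits} follows from Theorem~\ref{th:ErdosStone}, and your argument (lower bound from $CT_r(m)$, upper bound from $F \subseteq K_{r+1}[\,|V(F)|\,]$ plus the contrapositive of Theorem~\ref{th:mexErdosStone}) is precisely that deduction. One small slip worth fixing: the parenthetical justification of the lower bound is not valid as stated --- $K_{r+1}$-freeness of $CT_r(m)$ does not by itself rule out subgraphs of chromatic number $r+1$ (a triangle-free graph can contain $C_5$); the correct reason, which you also give in the same sentence, is that $CT_r(m)$ is $r$-partite, hence $r$-colourable, hence contains no graph with $\chi(F)=r+1$.
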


Theorem~\ref{th:mexErdosSimonovits} follows from Theorem~\ref{th:mexErdosStone} in much the same way that Theorem~\ref{th:ErdosSimonovits} follows from Theorem~\ref{th:ErdosStone}, so we will omit the proof.

\begin{theorem}\label{th:mexStability}
For all $r \geq s \geq 3$ and all $\e > 0$, there exist $m_0$ and $\d > 0$ such that if $m \geq m_0$ and $G$ is a $K_{r+1}$-free graph with $m$ edges such that
	\[
		\k_s(G) \geq (1 - \d)\mex_{K_s}(m, K_{r+1}),
	\]
then $G$ can be made $r$-partite by deleting at most~$\epsilon m$ edges.
\end{theorem}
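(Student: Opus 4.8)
The plan is to mimic the standard proof of the Erd\H{o}s--Simonovits stability theorem, but transferring everything from the ``$n$ vertices'' setting to the ``$m$ edges'' setting via the colex Tur\'an graph. First I would dispose of the vertices of small degree: since $\CT_r(m)$ has roughly $\sqrt{2m/(1-1/r)}$ vertices and $\k_s(\CT_r(m)) = \Theta(m^{s/2})$, the assumption $\k_s(G) \ge (1-\d)\k_s(\CT_r(m))$ forces $G$ to have a subgraph $G'$ of large minimum degree (say $\delta(G') \ge c\sqrt m$ for an appropriate constant $c = c(r,s)$) on which almost all copies of $K_s$ still live; discarding low-degree vertices destroys only $o(m^{s/2})$ copies of $K_s$ and $o(m)$ edges. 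In particular $G'$ has $n' = O(\sqrt m)$ vertices and $e(G') = m - o(m)$ edges, and $\k_s(G') \ge (1 - \d')\k_s(\CT_r(m))$ for a slightly worse $\d'$.

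Next I would translate the $K_s$-count bound into an edge-count bound on $G'$. Because $G'$ is $K_{r+1}$-free, Theorem~\ref{th:Froh} (Frohmader) bounds $\k_s(G')$ by $\k_s(\CT_r(e(G')))$; but I want the reverse direction — that having nearly the maximum number of $K_s$'s forces nearly the maximum number of edges. This is the Kruskal--Katona-flavored step: if $G'$ had significantly fewer than $t_r(n')$ edges for its number of vertices $n'$, then even the extremal $K_{r+1}$-free graph on those edges, namely $\CT_r(e(G'))$, would have too few copies of $K_s$ — quantitatively, $\k_s(\CT_r(m'))$ is a strictly convex-type increasing function of $m'$ with $\k_s(\CT_r(m'))/\k_s(\CT_r(m)) \to 0$ faster than $m'/m$ once $m'$ drops below $(1-\eta)m$. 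Hence $e(G') \ge (1-\d'')t_r(n')\ge (1-\d'')t_r(n')$, i.e.\ $G'$ is a $K_{r+1}$-free graph on $n'$ vertices with nearly the Tur\'an number of edges. Now apply the classical Erd\H{o}s--Simonovits stability theorem (Theorem~\ref{th:stability}) to $G'$: for $m$ (hence $n'$) large and $\d''$ small, $G'$ can be made $r$-partite by deleting at most $\e' (n')^2 = O(\e' m)$ edges. Combining with the $o(m)$ edges deleted in the first step, $G$ itself becomes $r$-partite after deleting at most $\e m$ edges, provided we chose $\e', c, \d$ appropriately in terms of $\e$.

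The main obstacle is the second step: making precise and quantitative the claim that ``nearly maximum $K_s$-count'' implies ``nearly maximum edge-count.'' One has to control $\k_s(\CT_r(m'))$ as a function of $m'$ well enough to see that a constant-factor deficit in edges produces a constant-factor deficit in $K_s$'s; this requires understanding the structure of $\CT_r(m')$ (its unique large component is essentially a Tur\'an graph with a partial extra layer) and estimating $\k_s$ of such graphs to within $(1+o(1))$ factors. A clean way to handle this is to prove a lemma of the form $\k_s(\CT_r(m')) \le (1+o(1))\binom r s (r-1)^{-s}\,(2m')^{s/2}$ with a matching lower bound, so that the ratio $\k_s(\CT_r(m'))/\k_s(\CT_r(m))$ is $(1+o(1))(m'/m)^{s/2}$; then $(1-\d) \le (1+o(1))(e(G')/m)^{s/2}$ gives $e(G') \ge (1-\d'')m$ immediately, and since $e(G') \le t_r(n')$ always (as $G'$ is $K_{r+1}$-free), we also get $t_r(n') \ge (1-\d'')m$, so $G'$ really is near-extremal on its own vertex set. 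A secondary technical point is ensuring that the first (degree-cleaning) step can be carried out so that $n'$ is genuinely $\Theta(\sqrt m)$ rather than much larger — one iteratively deletes vertices $v$ with $\k_s(v)$ small relative to the average, which works because the total $K_s$-count is large and bounded.
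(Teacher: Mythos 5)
Your Step 1 (iteratively deleting vertices of degree below a constant times $\sqrt{e(G_i)}$ and arguing that only a small fraction of edges and $K_s$'s can be lost) is exactly the paper's Lemma~\ref{le:denseSubgraph}. Where you diverge is Step 2: you want to feed the resulting $G'$ into the classical Erd\H{o}s--Simonovits stability theorem (Theorem~\ref{th:stability}) as a black box, whereas the paper re-runs the stability argument from scratch on $G'$ (use the minimum-degree bound and Tur\'an to find a $K_r$ on a set $U$, partition the vertices seeing all but one vertex of $U$ into $r$ independent sets, and show the exceptional set $B$ is small). Your route is legitimate in principle and arguably cleaner, but as written it has a genuine gap.

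The gap is in the deduction that $G'$ is near-extremal \emph{on its own vertex set}. You argue: the $K_s$-count forces $e(G') \geq (1-\d'')m$, and ``since $e(G') \leq t_r(n')$ always \dots we also get $t_r(n') \geq (1-\d'')m$, so $G'$ really is near-extremal.'' This is backwards: $t_r(n') \geq e(G')$ gives a \emph{lower} bound on $t_r(n')$, whereas to apply Theorem~\ref{th:stability} you need $e(G') \geq (1-\d_{\mathrm{stab}})t_r(n')$, i.e.\ an \emph{upper} bound on $t_r(n')$, equivalently on $n'$. If $n'$ were, say, $10\sqrt{m}$, then $G'$ could retain all $m$ edges and all its $K_s$'s while satisfying $e(G') \ll t_r(n')$, and stability would say nothing. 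The only source of the needed upper bound is the minimum-degree condition, and the constant there cannot be ``an appropriate $c(r,s)$'': you need $\d(G') \geq (1-O(\e))\beta_r (m')^{1/2}$ with $\beta_r = (2(r-1)/r)^{1/2}$ exactly, since then $n' \leq 2m'/\d(G') \leq (1+O(\e))\tfrac{2}{\beta_r}(m')^{1/2}$ and hence $t_r(n') \leq (1+O(\e))m'$, which is what makes $e(G')/t_r(n') \geq 1-O(\e)$. This in turn makes your Step 1 delicate rather than routine: with threshold $(1-2\e)\beta_r(e(G_i))^{1/2}$ the upper bound on cliques destroyed per deleted vertex (using that $G$ is $K_{r+1}$-free, so neighborhoods are $K_r$-free and Theorem~\ref{th:Zykov} applies) only just undercuts the lower bound on cliques that must be destroyed if $\e m$ edges go missing; the paper's identity $\tfrac{\binom{r-1}{s-1}}{(r-1)^{s-1}}\beta_r^{s-2} = \tfrac{s}{2}c_{r,s}$ (Proposition~\ref{prop:beta c_s}) is precisely this near-cancellation. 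Finally, mind the quantifier order when you do invoke Theorem~\ref{th:stability}: the deficiency $\d_{\mathrm{stab}}(\e')$ it tolerates may be much smaller than $\e'$, so the cleaning parameter must be chosen after $\e'$, as $\min\{\e/2,\ \d_{\mathrm{stab}}(\e')/4\}$ or similar. With these repairs your argument goes through and is a valid alternative to the paper's self-contained Step 2.
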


\begin{corollary}\label{cor:mexStabilityGeneral}
For all $r \geq s \geq 3$, every graph~$F$ with chromatic number~$r+1$, and all $\e > 0$, there exist $m_0$ and $\d > 0$ such that if $m \geq m_0$ and $G$ is an $F$-free graph with $m$ edges such that
	\[
		\k_s(G) \geq (1 - \d)\mex_{K_s}(m, F),
	\]
then $G$ can be made $r$-partite by deleting at most~$\epsilon m$ edges.
\end{corollary}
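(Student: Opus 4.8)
The plan is to reduce Corollary~\ref{cor:mexStabilityGeneral} to Theorem~\ref{th:mexStability} by deleting a negligible set of edges from $G$ to obtain a genuinely $K_{r+1}$-free graph. Write $\ell = |V(F)|$. Since $\chi(F) = r+1$, the graph $F$ embeds into the complete $(r+1)$-partite graph with all parts of size~$\ell$, i.e.\ $F \subseteq K_{r+1}[\ell]$, so $G$ contains no copy of $K_{r+1}[\ell]$. Moreover, by Theorem~\ref{th:mexErdosSimonovits} we have $\mex_{K_s}(m,F) = \mex_{K_s}(m,K_{r+1}) + o(m^{s/2})$, and since $\mex_{K_s}(m,K_{r+1}) = \k_s(\CT_r(m)) = \Theta(m^{s/2})$, the hypothesis becomes $\k_s(G) \ge (1-\delta-o(1))\,\mex_{K_s}(m,K_{r+1})$.

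First I would pass to a bounded-order ``core''. Fix a small $\beta>0$ and let $H$ be obtained from $G$ by deleting every vertex of $G$-degree less than $\beta\sqrt m$. Since $\sum_v d_G(v)=2m$, the graph $H$ has at most $2\sqrt m/\beta$ vertices, and since each deleted vertex lies in fewer than $\binom{\beta\sqrt m}{s-1}$ copies of $K_s$, the bound $\sum_v d_G(v) = 2m$ together with $s\ge 3$ gives $\k_s(H) \ge \k_s(G) - O(\beta^{\,s-2})\,m^{s/2}$. Now $H$ is $K_{r+1}[\ell]$-free with $e(H)$ edges, so Theorem~\ref{th:mexErdosStone} bounds $\k_s(H)$ above by $\mex_{K_s}(e(H),K_{r+1}) + o(m^{s/2})$; comparing $\mex_{K_s}$ at $e(H)$ and at $m$ with the two displayed inequalities for $\k_s(H)$ yields $e(H) \ge \bigl(1 - O(\delta + \beta^{\,s-2}) - o(1)\bigr)m$. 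In particular the set of edges of $G$ not in $H$ can be made a prescribed small fraction of $m$ by choosing $\delta,\beta$ small and $m$ large.

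Next I would clean $H$. Because $H$ is $K_{r+1}[\ell]$-free with $\ell$ fixed, the Erd\H{o}s supersaturation theorem for complete multipartite graphs gives that $H$ has only $o(|V(H)|^{r+1})$ copies of $K_{r+1}$; hence by the graph removal lemma $H$ can be made $K_{r+1}$-free by deleting at most $\eta\,|V(H)|^2 \le \eta\,(2/\beta)^2 m$ edges, for any $\eta>0$ we wish (once $m$ is large). Each such deleted edge lies in at most $\binom{|V(H)|-2}{s-2} = O_\beta(m^{(s-2)/2})$ copies of $K_s$, so this step destroys only $O(\eta\,\beta^{-s})\,m^{s/2}$ copies of $K_s$, which is $o(m^{s/2})$ once $\eta$ is chosen small relative to $\beta$. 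Let $G'$ be the resulting $K_{r+1}$-free graph, with $m'=e(G')$ edges. Then $m' = (1-o_{\delta,\beta,\eta}(1))m$, and
\[
  \k_s(G') \;\ge\; \k_s(G) - O(\beta^{\,s-2})m^{s/2} - o_\eta(m^{s/2}) \;\ge\; (1-\delta_1)\,\mex_{K_s}(m',K_{r+1}),
\]
where $\delta_1>0$ is the constant provided by Theorem~\ref{th:mexStability} applied with error parameter $\epsilon/2$, provided $\delta,\beta,\eta$ are small enough and $m$ large (using $\mex_{K_s}(m',K_{r+1}) \le \mex_{K_s}(m,K_{r+1}) = \Theta(m^{s/2})$). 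Applying Theorem~\ref{th:mexStability} to $G'$ makes it $r$-partite after deleting at most $(\epsilon/2)m' \le (\epsilon/2)m$ edges; re-inserting the discarded vertices (placed arbitrarily in the partition) and adding the edges removed during cleaning contributes at most another $\epsilon/2$ fraction of $m$, so $G$ is $r$-partite after deleting at most $\epsilon m$ edges. Finally one picks $m_0$ large enough to absorb every ``$m$ large'' requirement above.

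The main obstacle is the cleaning step, and specifically the quantitative bookkeeping it requires: we must simultaneously control (i) the edges of $G$ discarded when forming the core, (ii) the edges removed from the core to destroy all copies of $K_{r+1}$, and (iii) the copies of $K_s$ lost in (i) and (ii), keeping each of these a small multiple of $m$ or of $m^{s/2}$. Restricting first to the $O(\sqrt m)$-vertex core is what makes (ii) feasible, since it turns the removal lemma's $\eta n^2$ bound into $\eta\,O(m)$; and the hypothesis $s\ge 3$ is what makes the low-degree discard in (i) cost only $O(\beta^{\,s-2})m^{s/2} = o(m^{s/2})$ copies of $K_s$ rather than a constant fraction. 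The only external input beyond the earlier theorems in this paper is the standard supersaturation fact that for fixed $\ell$ a $K_{r+1}[\ell]$-free graph on $n$ vertices contains only $o(n^{r+1})$ copies of $K_{r+1}$; everything else is elementary counting together with Theorems~\ref{th:mexErdosStone}, \ref{th:mexErdosSimonovits}, and~\ref{th:mexStability}.
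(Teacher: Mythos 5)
Your proposal is correct and follows essentially the same route as the paper: extract a dense core on $O(\sqrt m)$ vertices so that the removal lemma's $\eta n^2$ cost becomes a small multiple of $m$, clean out all copies of $K_{r+1}$, and then invoke Theorem~\ref{th:mexStability}. The only differences are cosmetic: the paper extracts the core by iteratively deleting vertices of degree below $\e\, e(G_i)^{1/2}$ and bounds the edge loss by a contradiction argument via Theorem~\ref{th:mexErdosSimonovits}, whereas you delete all vertices of $G$-degree below $\beta\sqrt m$ in one pass and recover $e(H)\ge(1-o(1))m$ from the $K_{r+1}[\ell]$-freeness upper bound; and you unpack Theorem~\ref{th:removal} into supersaturation plus the removal lemma rather than citing it directly.
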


Our results establish a number of parallels between $\mex_T(n,F)$ and~$\ex(n,F)$, with the colex Tur\'an graph~$CT_r(m)$ playing the role in results about $\mex_T(n,F)$ that the Tur\'an graph~$T_r(n)$ plays in results about $\ex(n,F)$.  However, this correspondence is not perfect.  

Let $F$ be a graph with a critical edge.  Simonovits~\cite{Simonovits68} used the stability method to determine $\ex(n, F)$ (and the extremal graph) for all $n$~sufficiently large.

\begin{theorem}\label{th:criticalEdge}
Let $F$ be a graph with $\chi(F) = r + 1$ and suppose that $F$ contains an edge~$e$ such that $\chi(F - e) = r$.  For all $n$~sufficiently large, $\ex(n, F) = t_r(n)$ and $T_r(n)$ is the unique extremal graph.
\end{theorem}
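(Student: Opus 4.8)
The plan is to follow the stability method of Simonovits, using the three classical theorems recalled above. The lower bound is immediate: since $\chi(F)=r+1$, the Tur\'an graph $T_r(n)$ is $r$-colourable, hence $F$-free, so $\ex(n,F)\ge t_r(n)$. For the matching upper bound and uniqueness I would take an $F$-free graph $G$ on $n$ vertices with $e(G)=\ex(n,F)$ and argue, by induction on $n$, that $G=T_r(n)$ once $n$ is large. By Theorem~\ref{th:ErdosSimonovits} we have $e(G)=t_r(n)+o(n^2)$; by Theorem~\ref{th:stability}, for a small parameter $\eta>0$ to be chosen there is a partition $V(G)=V_1\cup\dots\cup V_r$ with at most $\eta n^2$ \emph{bad edges} (those with both ends in one part), and I would fix such a partition minimising this number. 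A minimum-degree reduction then applies: if some vertex $v$ had $d_G(v)<t_r(n)-t_r(n-1)=\delta(T_r(n))$, then $G-v$ would be an $F$-free graph on $n-1$ vertices with more than $t_r(n-1)$ edges, contradicting the inductive hypothesis; hence $\delta(G)\ge\delta(T_r(n))=n-\lceil n/r\rceil$.

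Next I would carry out a standard cleanup. Comparing $e(G)\ge t_r(n)$ with the trivial upper bound $\sum_{i<j}|V_i||V_j|\le\tfrac12(1-1/r)n^2$ on the number of cross-pairs shows that the parts are nearly balanced, each of size $(1/r)n+O(\sqrt{\eta}\,n)$, and that all but $O(\sqrt{\eta}\,n)$ of the vertices are \emph{dominant}, meaning adjacent to all but $O(\sqrt{\eta}\,n)$ of the vertices of each other part.

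The heart of the argument is the use of the critical edge. Since $\chi(F-e)=r<\chi(F)$, in every proper $r$-colouring of $F-e$ the two endpoints of $e$ must receive the same colour; fixing such a colouring with colour classes $C_1\ni e_1,e_2$ and $C_2,\dots,C_r$ and writing $M=|V(F)|$, one sees that $F$ is a subgraph of the graph obtained from the balanced complete $r$-partite graph $K_r[M]$ by adding a single edge inside its first part (map $C_i$ into part $i$, with $e_1,e_2$ the ends of the added edge). So it is enough to locate inside $G$ a copy of $K_r[M]$ together with an edge lying inside one of its parts. Suppose some part $V_p$ contains an edge $uv$ with $u$ and $v$ both dominant. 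For each $j\neq p$ the common neighbourhood $W_j:=N(u)\cap N(v)\cap V_j$ has $(1/r)n-O(\sqrt{\eta}\,n)$ vertices, and the subgraph induced on $\bigcup_{j\neq p}W_j$ misses only $o(n^2)$ of the edges of the complete $(r-1)$-partite graph with these (huge) parts, so --- either by a direct greedy embedding or by Theorem~\ref{th:ErdosStone} --- it contains a copy of $K_{r-1}[M]$ whose parts $A_j\subseteq W_j$ consist of dominant vertices. Each of these $(r-1)M$ vertices misses only $O(\sqrt{\eta}\,n)$ vertices of $V_p$, while $|V_p|=(1/r)n-O(\sqrt{\eta}\,n)$, so (choosing $\eta$ small in terms of $M$ and $r$) one can pick $M-2$ further vertices of $V_p\setminus\{u,v\}$ adjacent to all of them; together with $u$ and $v$ these form a part $A_p$ that completes a copy of $K_r[M]$ containing the edge $uv$ inside $A_p$. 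This yields $F\subseteq G$, a contradiction, so $G$ has no edge lying inside a part with both ends dominant.

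It then remains to rule out bad edges incident to the $O(\sqrt{\eta}\,n)$ non-dominant vertices and to conclude that $G$ is genuinely $r$-partite. Here the sharp minimum-degree bound is used again: a non-dominant vertex $v$ still has at least $n-\lceil n/r\rceil$ neighbours, hence at most $\lceil n/r\rceil$ non-neighbours outside its own part, and a careful analysis of how these few non-neighbours can be spread among the parts shows that a bad edge at $v$ still forces a copy of $F$ (after, if necessary, a local re-optimisation of the partition). Once $G$ is $r$-partite it is $F$-free, so by maximality of $e(G)$ it must be complete $r$-partite; then $e(G)=\sum_{i<j}|V_i||V_j|\le t_r(n)$ with equality only for balanced parts, and since $e(G)\ge t_r(n)$ we conclude $G=T_r(n)$; applying the same reasoning to an arbitrary extremal graph gives uniqueness. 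I expect the main obstacle to be exactly this last step: upgrading ``$G$ lies within $o(n^2)$ edges of $T_r(n)$ and has the exact minimum degree'' to the on-the-nose conclusion $G=T_r(n)$ requires delicate control of the few atypical vertices, and the induction has to be organised as Simonovits' \emph{progressive induction} so that it is self-supporting and needs no explicit base case; by comparison, the embedding step above is quite robust.
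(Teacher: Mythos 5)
The paper does not prove this theorem: it is quoted as a classical result of Simonovits~\cite{Simonovits68}, included only to contrast the behaviour of $\ex(n,F)$ with that of $\mex_{K_s}(m,F)$. So there is no in-paper argument to compare against; what you have written is an outline of the standard stability-method proof, and in its broad strokes it is the intended one. The lower bound, the derivation of the near-partition from Theorems~\ref{th:ErdosSimonovits} and~\ref{th:stability}, the observation that $\chi(F-e)=r$ forces the ends of $e$ into one colour class so that $F\subseteq K_r[M]$ plus one edge inside a part, and the greedy embedding of $K_{r-1}[M]$ into the common neighbourhood of a bad edge with dominant ends are all correct and are exactly how the classical proof goes.

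Two caveats. First, your minimum-degree reduction as literally stated is circular: deleting a low-degree vertex contradicts ``$\ex(n-1,F)=t_r(n-1)$,'' which is the theorem itself at $n-1$ and is only available for $n-1$ large, so there is no base case. You name the standard fix (Simonovits' progressive induction); an alternative that avoids induction entirely is to iterate the low-degree deletion and note that after $o(n)$ deletions the surviving graph would exceed the Erd\H{o}s--Simonovits bound, so the process stops quickly and one works with the resulting subgraph. Second, the step you defer --- eliminating bad edges incident to the $O(\sqrt{\eta}\,n)$ atypical vertices and upgrading ``within $o(n^2)$ of $T_r(n)$'' to exact equality --- is where most of the actual work in Simonovits' proof lives (one uses the minimality of the partition to get $d_{V_i}(v)\le d_{V_j}(v)$ for $v\in V_i$, combines this with the degree bound to show every vertex is in fact nearly dominant, and only then concludes there are no bad edges at all). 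You flag this honestly, so I would call your write-up a correct and well-organised sketch of the known proof rather than a complete proof; to make it self-contained you would need to carry out that last analysis in detail.
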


In contrast, if $F$ is as in the statement of Theorem~\ref{th:criticalEdge} and $\d(F) > r$, there exist infinitely many values of~$m$ such that $CT_r(m)$ is not an extremal graph for $\mex_{K_s}(m,F)$.

Given $m$, let $n$ be the least integer such that $m \leq t_r(n)$.  Let $G$ be the graph consisting of~$T_r(n-1)$ and a vertex~$v^*$ that is joined to $m - t_r(n-1)$ vertices of~$T_r(n-1)$, distributed as evenly as possible among the $r$ classes of~$T_r(n-1)$.  Observe that if $r \leq m - t_r(n-1) < \d(F)$, then $G$ is $F$-free but not $r$-partite.  Moreover,
\begin{align*}
\k_s(G) &= \k_s(T_r(n-1)) + \k_s(v^*) \\
&= \k_s(T_r(n-1)) + \k_{s-1}\bigl(T_r\bigl(m - t_r(n-1)\bigr)\bigr) \\
&> \k_s(T_r(n-1)) + \k_{s-1}\bigl(T_{r-1}\bigl(m - t_r(n-1)\bigr)\bigr)\\
&= \k_s(CT_r(m)).
\end{align*}

Finally there are a number of very natural analogues of results concerning $\ex(n,F)$ that are open for $\mex_T(m,F)$. In Section~\ref{sec:open} we briefly discuss some of these open problems.


\section{Proof of Theorems~\ref{th:mexErdosStone}, \ref{th:mexErdosSimonovits}, and \ref{th:mexStability}} 
\label{sec:SSS}

\subsection{Preliminaries and notation} 
\label{sub:preliminaries_and_notation}

Let $G$ be a graph and let $s \geq 2$.  Recall that for $v \in V(G)$ and $e \in E(G)$, $\k_s(v)$ and $\k_s(e)$ denote the number of copies of~$K_s$ in $G$ that contain $v$ and the number of copies of~$K_s$ in $G$ that contain $e$, respectively.  The minimum values of these quantities are denoted
\begin{align*}
    \d_s(G) &= \min\setof{\k_s(v)}{v \in V(G)} \\
    \d'_s(G) &= \min\setof{\k_s(e)}{e \in E(G)}.
\end{align*}

In the extremal $K_{r+1}$-free graph $\CT_r(m)$ the average degree is a multiple of $m^{1/2}$ and the number of copies of $K_s$ is a multiple of $m^{s/2}$. We define those constant multiples here. Given $r \geq 2$ and $s \geq 3$, let
\begin{equation}\label{eq:beta}
\beta_r = \biggl(\frac{2(r-1)}{r}\biggr)^{1/2}
\end{equation}
and let
\begin{equation}\label{eq:c_s}
c_{r, s} = \dfrac{\binom{r}{s}}{\binom{r}{2}^{s/2}}.
\end{equation}

The following simple proposition collects some computations about $\k_s(\CT_r(m))$.

\begin{proposition}\label{prop:colexTuranCliques}
    If $r \divides n$ and $m = t_r(n)$, then $m = \bigl(\frac{n}{r}\bigr)^2\binom{r}{2}$ and so 
    \[
        \k_s(\CT_r(m)) = \k_s(T_r(n)) = \biggl(\frac{n}{r}\biggr)^s\binom{r}{s} = c_{r, s} m^{s/2}.
    \]
    In particular in this case $\CT_r(m)$ is $\beta_r m^{1/2}$-regular. Furthermore we have
    \begin{equation*}\label{eq:asymptoticKs}
    \k_s(\CT_r(m)) = c_{r, s} m^{s/2} + O(m^{(s-1)/2}).
    \end{equation*}
\end{proposition}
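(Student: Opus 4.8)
The plan is to handle the three assertions separately: the first two by an explicit computation in the balanced Tur\'an graph, and the last by a monotonicity/sandwiching argument that reduces it to the balanced case up to lower-order terms.

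\emph{The divisible case and regularity.} First I would observe that when $r \mid n$ the graph $T_r(n)$ has $r$ classes of size $n/r$, so $e(T_r(n)) = \binom{r}{2}(n/r)^2$; since $m = t_r(n)$ this is exactly $m = (n/r)^2\binom{r}{2}$, and solving gives $n/r = \bigl(m/\binom{r}{2}\bigr)^{1/2}$. By the remark following the definition of $\CT_r(m)$, the unique non-trivial component of $\CT_r(m)$ is isomorphic to $T_r(n)$, and since $s \geq 3$ no isolated vertex lies in a copy of $K_s$, so $\k_s(\CT_r(m)) = \k_s(T_r(n))$. A copy of $K_s$ in $T_r(n)$ is specified by a choice of $s$ of the $r$ classes together with one vertex from each, so $\k_s(T_r(n)) = \binom{r}{s}(n/r)^s$; substituting the value of $n/r$ yields $\binom{r}{s}\bigl(m/\binom{r}{2}\bigr)^{s/2} = c_{r, s}m^{s/2}$. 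For the regularity claim I would note that every vertex of $T_r(n)$ is joined to all $n - n/r = (r-1)n/r$ vertices outside its own class; substituting $n/r = \bigl(m/\binom{r}{2}\bigr)^{1/2}$ and using $\binom{r}{2} = r(r-1)/2$ gives common degree $(r-1)\bigl(m/\binom{r}{2}\bigr)^{1/2} = \bigl(2(r-1)/r\bigr)^{1/2}m^{1/2} = \beta_r m^{1/2}$, so the non-trivial component of $\CT_r(m)$ is $\beta_r m^{1/2}$-regular.

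\emph{The general asymptotic formula.} For arbitrary $m$ I would let $n$ be the largest integer with $t_r(n) \leq m$, so $t_r(n) \leq m < t_r(n+1)$. The difference $t_r(n+1) - t_r(n)$ is the degree, in $T_r(n+1)$, of the vertex added in passing from $T_r(n)$ to $T_r(n+1)$, which is $O(n)$; since $t_r(n) = \binom{r}{2}(n/r)^2 + O(n)$ we get $n = \Theta(m^{1/2})$, and hence $m - t_r(n) = O(m^{1/2})$ and $t_r(n+1) - m = O(m^{1/2})$. Because $\CT_r(m')$ is a subgraph of $\CT_r(m'')$ whenever $m' \leq m''$, and $\CT_r(t_r(N))$ has non-trivial component $T_r(N)$, clique counts are monotone and so
\[
\k_s(T_r(n)) \;\leq\; \k_s(\CT_r(m)) \;\leq\; \k_s(T_r(n+1)).
\]
It then suffices to prove $\k_s(T_r(N)) = c_{r, s}m^{s/2} + O(m^{(s-1)/2})$ for $N \in \{n, n+1\}$. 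Writing $N = qr + t$ with $0 \leq t < r$ (so $T_r(N)$ has $t$ classes of size $q+1$ and $r-t$ of size $q$, and $q = \Theta(m^{1/2})$), I would expand
\[
\k_s(T_r(N)) = \sum_{j} \binom{t}{j}\binom{r-t}{s-j}(q+1)^j q^{s-j} = \binom{r}{s}q^s + O(q^{s-1}),
\]
the leading coefficient being $\sum_j \binom{t}{j}\binom{r-t}{s-j} = \binom{r}{s}$ by Vandermonde; likewise $t_r(N) = \binom{r}{2}q^2 + O(q)$. Since $m = t_r(N) + O(m^{1/2}) = \binom{r}{2}q^2 + O(m^{1/2})$ and $q^2 = \Theta(m)$, raising to the power $s/2$ gives $m^{s/2} = \binom{r}{2}^{s/2}q^s + O(q^{s-1})$, hence $c_{r,s}m^{s/2} = \binom{r}{s}q^s + O(q^{s-1}) = \binom{r}{s}q^s + O(m^{(s-1)/2})$. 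Comparing with the display for $\k_s(T_r(N))$ proves the claim for both $N = n$ and $N = n+1$, and the sandwich finishes the proof.

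\emph{What is delicate.} None of this is deep; the only real care is in the last part: tracking that $m$, $t_r(n)$, and $t_r(n+1)$ differ pairwise by $O(m^{1/2})$ and that this error survives, with the stated order, after taking the $(s/2)$-th power and multiplying by the constant $c_{r,s}$. One should also note at the outset that ``regular'' must be read as a statement about the non-trivial component, since $\CT_r(m)$ carries infinitely many isolated vertices.
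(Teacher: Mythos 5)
Your proof is correct, and since the paper dismisses this proposition as ``Straightforward,'' your write-up is simply the intended direct computation filled in: the exact formulas in the divisible case, and the sandwich $\k_s(T_r(n)) \le \k_s(\CT_r(m)) \le \k_s(T_r(n+1))$ with $m - t_r(n),\, t_r(n+1)-m = O(m^{1/2})$ for the general asymptotic. The caveat about reading ``regular'' as a statement about the non-trivial component is also the right reading of the paper's convention.
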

\begin{proof}
    Straightforward.
\end{proof}

We also record some properties of the constants $\beta_r$ and $c_{r, s}$ defined above.

\begin{proposition}\label{prop:beta c_s}
For all $r \geq 2$ and $s \geq 3$,
\begin{align}%
\dfrac{\binom{r-1}{s-1}}{(r-1)^{s-1}}\beta_r^{s-2} &= \frac{s}{2}\,c_{r, s} \label{eq:beta c_{r, s}} \\
\intertext{and}
    c_{r, s} &= \frac{2^{s/2}}{s!} \cdot \frac{r_{(s)}}{r^{s/2}(r-1)^{s/2}} \leq \frac{2^{s/2}(r-2)}{s!(r-1)}. \label{eq:c_sUB}
\end{align}
\end{proposition}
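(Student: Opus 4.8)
The plan is to verify both statements in Proposition~\ref{prop:beta c_s} by direct substitution of the definitions \eqref{eq:beta} and \eqref{eq:c_s}, since nothing here requires combinatorial insight — these are bookkeeping identities that will be invoked later in the supersaturation arguments. For \eqref{eq:beta c_{r, s}}, I would start from $\beta_r^{s-2} = \bigl(\tfrac{2(r-1)}{r}\bigr)^{(s-2)/2}$ and multiply by $\binom{r-1}{s-1}/(r-1)^{s-1}$. The right-hand side is $\tfrac{s}{2} c_{r,s} = \tfrac{s}{2}\binom{r}{s}\big/\binom{r}{2}^{s/2}$, so after writing $\binom{r}{2}^{s/2} = \bigl(\tfrac{r(r-1)}{2}\bigr)^{s/2}$ the whole identity reduces to a single equation among factorials and powers of $r$, $r-1$, and $2$. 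The key algebraic facts are $s\binom{r}{s} = r\binom{r-1}{s-1}$ (absorbing the factor $\tfrac{s}{2}$) and tracking that the powers of $2$ and of $r$ on each side agree; I expect everything to cancel cleanly once the binomial coefficients are expanded as $r_{(s)}/s!$ and $(r-1)_{(s-1)}/(s-1)!$.

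For \eqref{eq:c_sUB}, I would rewrite $c_{r,s} = \binom{r}{s}\big/\binom{r}{2}^{s/2}$ by expanding $\binom{r}{s} = r_{(s)}/s!$ and $\binom{r}{2}^{s/2} = \bigl(r(r-1)/2\bigr)^{s/2} = r^{s/2}(r-1)^{s/2}/2^{s/2}$, which immediately gives the stated closed form $c_{r,s} = \tfrac{2^{s/2}}{s!}\cdot\tfrac{r_{(s)}}{r^{s/2}(r-1)^{s/2}}$. For the inequality, the natural move is to bound $r_{(s)} = r(r-1)(r-2)\cdots(r-s+1)$ from above. Pair the factors appropriately: $r_{(s)} \le r^{s/2}(r-1)^{s/2}$ would be too weak to leave the $(r-2)/(r-1)$ improvement, so instead I would keep one explicit factor of $(r-2)$ and argue that the remaining $s-1$ factors $r, (r-1), (r-3), \dots, (r-s+1)$ are bounded by an appropriate product of $r$'s and $(r-1)$'s — specifically that $r_{(s)}/(r-2) \le r^{s/2}(r-1)^{s/2-1}$. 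Combined with $s! \ge s!$ this yields $c_{r,s} \le \tfrac{2^{s/2}}{s!}\cdot\tfrac{r-2}{r-1}$, which is the claim.

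The only mild subtlety is making the factor-pairing in the upper bound rigorous for all $r \ge 2$ and $s \ge 3$ (including the degenerate range $s > r$, where $\binom{r}{s} = 0$ and the inequality holds trivially), and handling the parity of $s$ when splitting $s^{1/2}$-type exponents — but since $c_{r,s}$ involves $\binom{r}{2}^{s/2}$ with a genuine half-integer exponent, no integrality issue actually arises and the estimate is a straightforward term-by-term comparison. I do not anticipate a real obstacle; the main point is simply to record these identities carefully so they can be cited verbatim in the proofs of Theorems~\ref{th:mexErdosStone} and~\ref{th:mexStability}, where $c_{r,s}m^{s/2}$ is the main term of $\k_s(\CT_r(m))$ and $\beta_r m^{1/2}$ is its degree.
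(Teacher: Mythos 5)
Your proposal is correct and matches the paper's own argument: the paper proves \eqref{eq:beta c_{r, s}} by exactly the same direct substitution (handling $r<s$ trivially and using $s\binom{r}{s}=r\binom{r-1}{s-1}$ to absorb the factor $\tfrac{s}{2}$), and dismisses \eqref{eq:c_sUB} as immediate from \eqref{eq:c_s}. Your factor-pairing bound $r_{(s)}/(r-2)\le r^{s/2}(r-1)^{s/2-1}$ does hold (each remaining factor beyond $r(r-1)$ is at most $\sqrt{r(r-1)}$), so the only difference is that you spell out a step the paper leaves to the reader.
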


\begin{proof}
If $r < s$, then~\eqref{eq:beta c_{r, s}} holds trivially, as both sides equal~$0$.  If $r \geq s \geq 3$, \eqref{eq:beta} and~\eqref{eq:c_s} imply that
\begin{align*}
\dfrac{\binom{r-1}{s-1}}{(r-1)^{s-1}}\beta_r^{s-2} &= \dfrac{\binom{r-1}{s-1}}{(r-1)^{s-1}}\biggl(\dfrac{2(r-1)}{r} \biggr)^{\frac{s-2}{2}}\\
&= \dfrac{r}{2} \cdot \dfrac{(r-1)\fall[s-1]}{(s-1)!}\cdot \biggl(\dfrac{2}{r(r-1)} \biggr)^{s/2}\\
&=\dfrac{s}{2} \cdot \dfrac{r\fall[s]}{s!}\cdot \binom{r}{2}^{-s/2}\\
&=\dfrac{s}{2}c_{r, s}.
\end{align*}

Also, \eqref{eq:c_sUB} is immediate from~\eqref{eq:c_s}.
\end{proof}

We will need a consequence of the Kruskal--Katona theorem noted by Lov\'asz~\cite[Exercise~13.31]{Lovasz79}.

\begin{theorem}\label{th:LovaszKK}
Let $s \geq 3$ and let $x \geq 0$.  If $G$ is a graph with $\binom{x}{2}$ edges, then $\k_s(G) \leq \binom{x}{s}$.
\end{theorem}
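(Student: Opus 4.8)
This is the ``Lovász form'' of the Kruskal--Katona theorem applied to the family of $s$-cliques of $G$, so the plan is to set up that reduction and then invoke Theorem~\ref{thm:KK} (in a version that tolerates a real parameter). First I would pass from counting cliques to counting sets: let $\A \of \binom{V(G)}{s}$ be the collection of vertex sets of copies of $K_s$ in $G$, so that $|\A| = \k_s(G)$. Every $2$-subset of an $s$-clique is an edge of $G$, hence $\partial_2(\A) \of E(G)$ and therefore $|\partial_2(\A)| \le e(G) = \binom{x}{2}$. This is the only graph-theoretic input; everything else is about shadows of uniform families.

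We may assume $\k_s(G) \ge 1$, since otherwise the bound is immediate (and in that case $e(G) = \binom{x}{2} \ge \binom{s}{2}$, so $x \ge s$). Write $\k_s(G) = \binom{y}{s}$ for the unique real $y \ge s$ with this value, using that $t \mapsto \binom{t}{s}$ is an increasing bijection from $[s,\infty)$ onto $[1,\infty)$. By the Kruskal--Katona theorem in Lovász's real-parameter form~\cite{Lovasz79} --- which follows from Theorem~\ref{thm:KK} by passing to colex initial segments and interpolating --- we get $|\partial_2(\A)| \ge \binom{y}{2}$. If one prefers to stay strictly within Theorem~\ref{thm:KK}, one argues the contrapositive: let $\C$ be the colex initial segment of $\binom{\N}{s}$ of size $\k_s(G)$; then $\binom{x}{2} \ge |\partial_2(\A)| \ge |\partial_2(\C)|$, and it suffices to check that $|\partial_2(\C)| > \binom{x}{2}$ whenever $|\C| > \binom{x}{s}$. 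For integer $x$ this is transparent, since $\binom{[x]}{s}$ is itself a colex initial segment with $2$-shadow $\binom{[x]}{2}$, and any extra $s$-set in $\C$ introduces a vertex $>x$ and hence a new edge into the shadow; the general real case is the usual cascade estimate.

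Combining the two inequalities gives $\binom{y}{2} \le \binom{x}{2}$, and since $t \mapsto \binom{t}{2}$ is increasing on $[1,\infty)$ and both $x,y \ge 1$, we conclude $y \le x$. As $s \le y \le x$ and $t \mapsto \binom{t}{s}$ is increasing on $[s,\infty)$, we obtain $\k_s(G) = \binom{y}{s} \le \binom{x}{s}$, as desired. The proof has no substantive obstacle --- it is a routine corollary of Kruskal--Katona --- and the only points needing care are that $x$ is a real parameter rather than an integer (which is precisely why one quotes the Lovász form, or interpolates colex initial segments) and the bookkeeping of the monotonicity ranges together with the degenerate cases $\k_s(G) = 0$ and $e(G) = 0$.
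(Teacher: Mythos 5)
Your proof is correct, and it is worth noting that the paper itself offers no proof of this statement: it is quoted directly as Lov\'asz's Exercise~13.31, a known consequence of the Kruskal--Katona theorem. Your reduction --- take $\A \of \binom{V(G)}{s}$ to be the vertex sets of the $s$-cliques, observe $\partial_2(\A) \of E(G)$ so $\abs{\partial_2(\A)} \le \binom{x}{2}$, write $\k_s(G) = \binom{y}{s}$ for real $y \ge s$, apply the shadow bound $\abs{\partial_2(\A)} \ge \binom{y}{2}$, and conclude $y \le x$ by monotonicity --- is exactly the standard derivation, and your handling of the degenerate case $\k_s(G)=0$ and of the monotonicity ranges is careful and correct. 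The one place where you lean on something nontrivial is the assertion that the real-parameter shadow bound ``follows from Theorem~\ref{thm:KK} by passing to colex initial segments and interpolating'': for non-integer $y$ the quantity $\binom{y}{s}$ is not the size of an initial segment, so this step is genuinely the content of Lov\'asz's exercise (a separate induction, or the cascade computation you allude to) rather than a formal consequence of the colex statement. Since you flag this and sketch the contrapositive route through colex initial segments --- which is airtight for integer $x$ --- and since the paper treats the whole theorem as citable, this is a matter of attribution rather than a gap.
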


\begin{corollary}\label{cor:LovaszKKcor}
Let $s \geq 3$.  If $G$ is a graph with $m$ edges, then 
\[
    \k_s(G) \leq \bigl(1+o(1)\bigr)\frac{2^{s/2}}{s!}m^{s/2}.
\]
\end{corollary}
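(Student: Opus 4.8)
The plan is to reduce directly to Theorem~\ref{th:LovaszKK} by choosing the real parameter $x \geq 0$ so that $\binom{x}{2} = m$, i.e.\ $x = x(m)$ is the unique nonnegative solution of $x(x-1)/2 = m$. Since $G$ has exactly $m = \binom{x}{2}$ edges, Theorem~\ref{th:LovaszKK} applies verbatim and gives $\k_s(G) \leq \binom{x}{s}$. It then remains to estimate $\binom{x}{s}$ in terms of $m$. From $x^2 - x = 2m$ we get $x = \tfrac12 + \sqrt{2m + \tfrac14} = (2m)^{1/2}\bigl(1 + O(m^{-1/2})\bigr)$, so $x \sim (2m)^{1/2}$ as $m \to \infty$.

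The one subtlety is that $x$ need not be an integer, so $\binom{x}{s}$ means the falling-factorial polynomial $\frac{1}{s!}x(x-1)\cdots(x-s+1) = \frac{1}{s!}x_{(s)}$, which is exactly the quantity appearing in Theorem~\ref{th:LovaszKK}. (Indeed Theorem~\ref{th:LovaszKK} is stated for arbitrary real $x \geq 0$, and its proof via Kruskal--Katona uses precisely this generalized binomial coefficient.) Since $s$ is fixed,
\[
    \binom{x}{s} = \frac{x^s}{s!}\bigl(1 + O(1/x)\bigr) = \frac{(2m)^{s/2}}{s!}\bigl(1 + O(m^{-1/2})\bigr) = \bigl(1 + o(1)\bigr)\frac{2^{s/2}}{s!}m^{s/2},
\]
using $x \sim (2m)^{1/2}$ in the last step. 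Combining this with $\k_s(G) \leq \binom{x}{s}$ yields the claimed bound.

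I do not anticipate a genuine obstacle here: the result is a routine asymptotic repackaging of Theorem~\ref{th:LovaszKK}, and the only point requiring a word of care is the interpretation of $\binom{x}{s}$ for non-integer $x$ and the elementary estimate $x(m) = (2m)^{1/2} + O(1)$. If one prefers to avoid real binomial coefficients altogether, an alternative is to take the integer $n = \lceil x \rceil$, so that $\binom{n}{2} \geq m$; adding $\binom{n}{2} - m \leq n - 1 = O(m^{1/2})$ dummy edges to $G$ produces a graph with $\binom{n}{2}$ edges and at least as many copies of $K_s$, whence $\k_s(G) \leq \binom{n}{s}$, and then $n = (2m)^{1/2} + O(1)$ gives the same conclusion. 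Either way the proof is short.
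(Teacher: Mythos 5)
Your proof is correct and is exactly the ``straightforward'' deduction from Theorem~\ref{th:LovaszKK} that the paper intends (the paper omits the details entirely): solve $\binom{x}{2}=m$ for real $x$, apply the theorem, and use $x=(2m)^{1/2}+O(1)$ to estimate $\binom{x}{s}$. Nothing to add.
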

\begin{proof}
    Straightforward.
\end{proof}


We will also need the following result of Erd\H{o}s, Frankl, and R\"odl~\cite{ErdosFranklRodl86}. 

\begin{theorem}\label{th:removal}
Let $r \geq 2$.  For all $\eta > 0$ and every graph~$F$ with chromatic number~$r+1$, there exists $n_0$ such that if $G$ is an $F$-free graph of order~$n \geq n_0$, then $G$ can be made $K_{r+1}$-free by removing at most~$\eta n^2$ edges.
\end{theorem}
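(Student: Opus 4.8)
The plan is to deduce this from Szemer\'edi's regularity lemma, following the original argument of Erd\H{o}s, Frankl, and R\"odl. The only feature of $F$ we use is that $\chi(F) = r+1$, which means $F$ is $(r+1)$-partite and therefore embeds into the balanced blow-up $K_{r+1}[t]$ with $t = \abs{V(F)}$. Hence it is enough to show that an $F$-free graph $G$ of order $n$ can be made $K_{r+1}$-free by deleting at most $\eta n^2$ edges, and in fact we will only use that $G$ contains no copy of $K_{r+1}[t]$.

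I would fix the parameters in the following order. Given $\eta$ and $F$, set $t = \abs{V(F)}$ and $r+1 = \chi(F)$; choose a density threshold $\gamma$ small in terms of $\eta$; choose a regularity parameter $\varepsilon'$ small in terms of $\gamma$, $r$, $t$, and $\eta$ --- in particular small relative to $\gamma$, so that the embedding lemma (Key Lemma) guarantees a copy of $K_{r+1}[t]$ whenever it is handed $r+1$ vertex classes, each of size at least some $L_0 = L_0(\varepsilon',\gamma,r,t)$, that are pairwise $\varepsilon'$-regular of density at least $\gamma$; let $M = M(\varepsilon')$ be the bound on the number of parts furnished by the regularity lemma (applied so as to produce at least $\lceil 1/\varepsilon' \rceil$ parts); and finally take $n_0$ large enough that $n \geq n_0$ forces $(1-\varepsilon')n/M \geq L_0$.

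Next I would apply the regularity lemma to $G$ to obtain an $\varepsilon'$-regular partition $V(G) = V_0 \cup V_1 \cup \dots \cup V_k$ with $1/\varepsilon' \leq k \leq M$ and $\abs{V_1} = \dots = \abs{V_k}$, and then delete every edge that (i) meets $V_0$, (ii) lies inside some $V_i$, (iii) lies in an irregular pair, or (iv) lies in a pair of density less than $\gamma$; a routine count bounds the number of deleted edges by $(\tfrac52\varepsilon' + \tfrac12\gamma)n^2 \leq \eta n^2$ by the choices of $\varepsilon'$ and $\gamma$. Call the resulting graph $G'$ and let $R$ be the cluster graph on $[k]$ in which $ij$ is an edge precisely when $(V_i,V_j)$ is $\varepsilon'$-regular of density at least $\gamma$. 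If $G'$ contained a $K_{r+1}$, its $r+1$ vertices would lie in $r+1$ distinct clusters $V_{a_1},\dots,V_{a_{r+1}}$ (none in $V_0$, by (i) and (ii)), and each surviving cross edge would certify $a_i a_j \in E(R)$, so $R$ would contain a $K_{r+1}$; but then the embedding lemma would produce a copy of $K_{r+1}[t]$ in $G$ (the clusters have size $\geq (1-\varepsilon')n/k \geq L_0$), hence a copy of $F$, contradicting $F$-freeness. Therefore $G'$ is $K_{r+1}$-free, and we have removed at most $\eta n^2$ edges.

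The proof is conceptually standard, so the main obstacle is bookkeeping: one must order the parameters correctly --- $t$, then $\gamma$, then $\varepsilon' \ll \gamma$, then $M(\varepsilon')$, then $n_0$ --- so that the embedding lemma really applies to the cluster graph while the four deletion steps together discard at most $\eta n^2$ edges. (One can repackage the argument as ``$F$-freeness plus supersaturation implies $G$ has only $o(n^{r+1})$ copies of $K_{r+1}$, whence the graph removal lemma for $K_{r+1}$ finishes,'' but both of those ingredients rest on exactly the same regularity argument.)
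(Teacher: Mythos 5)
Your argument is correct: it is the standard regularity-lemma proof of the Erd\H{o}s--Frankl--R\"odl theorem, with the parameters ordered properly ($t$, then $\gamma$, then $\e'$, then $M$, then $n_0$) and the cleaning step, cluster-graph observation, and embedding lemma all applied as they should be. The paper itself offers no proof of this statement --- it is quoted directly from Erd\H{o}s, Frankl, and R\"odl --- and your proposal matches the argument in that cited source, so there is nothing further to reconcile.
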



\subsection{Proof of Theorem~\ref{th:mexErdosStone}} 
\label{sub:proof_of_theorem_ref_th_mexerdosstone}

\begin{proof}[Proof of Theorem~\ref{th:mexErdosStone}]
First, we show that if $m$ is sufficiently large, then $G$ contains a subgraph~$G'$ that has both positive edge density and many copies of~$K_s$ relative to~$e(G')$.

Let $\rho$ be such that
\begin{equation}\label{eq:rho}
1 > \rho > 1 - \biggl(\frac{s!}{2^{s/2}}\Bigl(c_{r, s}+\frac{\e}{3}\Bigr)\biggr)^{2/s}.
\end{equation}
(Proposition~\ref{prop:beta c_s} implies that for all $r$ and $s$, if $\e$ is sufficiently small, then the right-hand side of~\eqref{eq:rho} is positive.)

Let $m$ be sufficiently large.  If $\d'_s(G) \geq \frac{2^{s-2}\e^{2s-4}}{(s-2)!} m^{(s-2)/2}$, we do nothing.  Otherwise, we let $G_0 = G$ and, for each $i \geq 0$, if $G_i$ contains an edge~$e_i$ with $\k_s(e_i) <  \frac{2^{s-2}\e^{2s-4}}{(s-2)!} (e(G_i))^{(s-2)/2}$, we set $G_{i+1} = G_i - e_i$. 

Suppose that we delete $\lfloor \rho m \rfloor $ such edges and let $G' = G_{\lfloor \rho m \rfloor}$ denote the resulting subgraph.  We have
\begin{align*}
\k_s(G') &= \k_s(G) - \sum_{i=0}^{\lfloor \rho m \rfloor - 1} \k_s(e_i) \\
&\geq \k_s(G) - \sum_{i=0}^{\lfloor \rho m \rfloor - 1}  \frac{2^{s-2}\e^{2s-4}}{(s-2)!} (m-i)^{(s-2)/2} \\
& \geq \k_s(G) - \frac{2^{s-2}\e^{2s-4}}{(s-2)!} (1+\e^2)\frac{2}{s} \bigl(m^{s/2} - \bigl((1-\rho) m\bigr)^{s/2}  \bigr).
\end{align*}
Thus, using~\eqref{eq:rho} twice, we have
\begin{align*}
\k_s(G') &\geq \k_s(G) - \frac{2^{s-2}\e^{2s-4}}{(s-2)!} \rho m^{s/2} \\
&> (c_{r, s} + \e - 2\e^{2s-4})m^{s/2} \\
&> \Bigl(c_{r, s} + \frac{2\e}{3}\Bigr)m^{s/2} \\
&> \biggl(\frac{2^{s/2}}{s!}(1 - \rho)^{s/2}+\frac{\e}{3}\biggr) m^{s/2},
\end{align*}
which contradicts Corollary~\ref{cor:LovaszKKcor}.

So, $G$ has a subgraph~$G'$ with $m' > (1-\rho)m$ edges and $n'$ vertices such that 
\[
    \d'_s(G') \geq \frac{2^{s-2}\e^{2s-4}}{(s-2)!} (m')^{(s-2)/2}.
\]

We claim that
\begin{equation}\label{eq:cliquesInG'}
\k_s(G') \geq (c_{r, s} + \e) (m')^{s/2}.
\end{equation}
Indeed, given $i \geq 1$, suppose that $\k_s(G_{i-1}) \geq (c_{r, s} + \e) e(G_{i-1})^{s/2}$.  If $\e$ is sufficiently small, then we have
\begin{align*}
\k_s(G_i) &\geq \k_s(G_{i-1}) - \frac{2^{s-2}\e^{2s-4}}{(s-2)!} e(G_{i-1})^{(s-2)/2} \\
&> (c_{r, s} + \e) e(G_{i-1})^{s/2} - (c_{r, s} + \e) \frac{s}{4}  e(G_{i-1})^{(s-2)/2} \\
&> (c_{r, s} + \e) (e(G_{i-1})-1)^{s/2} \\
&= (c_{r, s} + \e) e(G_{i})^{s/2}.
\end{align*}
The claimed inequality~\eqref{eq:cliquesInG'} follows by induction on $i$ and our assumption on $G = G_0$.

Observe that if $e \in E(G')$ and $v$ is an endpoint of~$e$, then $\k_s(e) \leq \k_{s-1}(v) \leq \binom{d(v)}{s-2}$.  It follows that
\[
\frac{2^{s-2}\e^{2s-4}}{(s-2)!} (m')^{(s-2)/2} \leq \d'_s(G') \leq \binom{\d(G')}{s-2} \leq \binom{2m'/n'}{s-2} \leq \frac{(2m'/n')^{s-2}}{(s-2)!},
\]
whence
\begin{equation}\label{eq:verticesLB}
n' \leq \frac{1}{\e^2} (m')^{1/2}.
\end{equation}

Suppose that $G'$ does not contain a copy of~$K_{r+1}[t]$.  By the trivial bound $n' > \sqrt{2m'}$, we may let $n'$ be as large as we wish by taking $m'$ to be sufficiently large.  So, by~\eqref{eq:verticesLB} and Theorem~\ref{th:removal}, if $m'$ is sufficiently large, then we can delete all copies of~$K_{r+1}$ in $G'$ by removing at most~$\e^{4s-3} (n')^2 \leq \e^{4s-7} m'$ edges.  This means that we remove at most~$\e^{4s-3} (n')^s \leq \e^{2s-3} (m')^{s/2}$ copies of~$K_s$.  Let $G''$ denote the resulting graph and let $m'' = e(G'')$.  By~\eqref{eq:cliquesInG'}, Proposition~\ref{prop:colexTuranCliques}, and Theorem~\ref{th:Froh}, we have
\begin{align*}
    \k_s(G'') \geq \k_s(G') - \e^{2s-3} (m')^{s/2}  &\geq (c_{r, s} + \e - \e^{2s-3})(m')^{s/2} \\
            &\geq \parens[\Big]{c_{r, s}(1 - \e^{4s-7})^{s/2} + \frac{\e}{2}} (m')^{s/2} > \mex_{K_s}(m'', K_{r+1}),
\end{align*}
a contradiction.
\end{proof}


\subsection{Proof of Theorem \ref{th:mexStability}} 
\label{sub:proof_of_theorem_ref_th_mexstability}

Proofs of stability results in extremal graph theory often begin by showing that a global density assumption on a graph~$G$ implies a minimum degree condition.  This is frequently accomplished by iteratively deleting vertices of degree at most~$\alpha |V(G)|$ (where $\alpha > 0$ is an appropriate constant) and showing that the density of~$G$ and the forbidden subgraph condition mean that only a small fraction of the vertices could have been deleted in this way.

However, in our case, if we delete vertices whose degree is too small as a function of the number of vertices of~$G$, then there is no reason to expect that the process will terminate quickly, for the simple reason that we do not know how many vertices $G$ has.  In particular, we may end up deleting far more than~$\e m$ edges.  Instead, letting $S$ denote the set of the vertices of~$G$ whose degree is too small as a function of the number of \emph{edges} of~$G$, we will show that the vertices of~$S$ span only a small fraction of the edges of~$G$.  We will then be able to show that $G - S$ has high minimum degree as a function of the number of vertices.

\begin{lemma}\label{le:denseSubgraph}
Given $r\ge s \geq 3$ and  $\e > 0$,  there exist $m_0$ and $\d > 0$ with the following property. If $m \geq m_0$ and $G$ is a $K_{r+1}$-free graph with $m$ edges such that
	\[
		\k_s(G) \geq (1 - \d)\mex_{K_s}(m, K_{r+1}),
	\]
then $G$ has a subgraph~$G'$ with $n'$ vertices and $m' \geq (1-\e)m$ edges such that 
    \[
        \d(G') \geq \beta_r(1-2\e)(m')^{1/2} \qquad\text{and also}\qquad \d(G') \geq \biggl(\frac{r-1}{r}-4\e\biggr)n'.
    \]  
\end{lemma}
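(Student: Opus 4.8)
The plan is to produce $G'$ in two stages: first delete low-degree vertices (measuring degree against $\sqrt{m}$) to get a subgraph in which every degree is at least $\beta_r(1-2\e)\sqrt{m'}$, and then show that this already forces the second, $n'$-relative, bound.

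For the first stage, set a threshold $\alpha = \beta_r(1 - c\e)\sqrt{m}$ for a suitable constant (we will want some slack so that after the process finishes the surviving degrees are still at least $\beta_r(1-2\e)\sqrt{m'}$, and since we will have deleted only a small fraction of the edges, $\sqrt{m'}$ and $\sqrt m$ are comparable up to a factor $1-O(\e)$). Iteratively delete any vertex of current degree below $\alpha$. The key point is that each such deletion destroys at most $\binom{\alpha}{s-1} = O(m^{(s-1)/2})$ copies of $K_s$: a vertex of degree $d$ lies in at most $\binom{d}{s-1}$ copies of $K_s$. Because $G$ is $K_{r+1}$-free, Frohmader's theorem (Theorem~\ref{th:Froh}) together with Proposition~\ref{prop:colexTuranCliques} gives $\k_s(G) \le c_{r,s}m^{s/2}+O(m^{(s-1)/2})$, and the hypothesis $\k_s(G)\ge(1-\d)\mex_{K_s}(m,K_{r+1})$ forces $\k_s(G)\ge (c_{r,s}-O(\d))m^{s/2}$. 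Hence if we deleted $D$ vertices we would have lost at least — here I would use a lower bound on how many cliques a positive fraction of deletions costs — but more simply: the final graph $G'$ still has $\k_s(G')\ge (c_{r,s}-O(\d))m^{s/2} - D\cdot\binom{\alpha}{s-1}$, while Theorem~\ref{th:LovaszKK} (in the form of Corollary~\ref{cor:LovaszKKcor}, applied after reconstituting edges) bounds $\k_s(G')$ from above in terms of $m'=e(G')$. Comparing, and using that $\binom{\alpha}{s-1}\le \frac{(\beta_r\sqrt m)^{s-1}}{(s-1)!}$, yields $D = O(\d/\e^{\,?})\cdot\sqrt m$ deleted vertices, hence at most $O(\d)\cdot m$ deleted edges; choosing $\d$ small in terms of $\e$ makes this at most $\e m$. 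This gives $m'\ge(1-\e)m$ and $\d(G')\ge\alpha\ge\beta_r(1-2\e)\sqrt{m'}$, which is the first inequality. I should double-check the precise bookkeeping so that the loss comparison is valid throughout the iteration (it is cleanest to run the whole comparison on the final graph rather than step by step).

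For the second inequality I would argue as follows. Let $G'$ have $n'$ vertices and $m'$ edges, with $\d(G')\ge\beta_r(1-2\e)\sqrt{m'}$; in particular $n' > \beta_r(1-2\e)\sqrt{m'}$, so $\sqrt{m'} < \frac{n'}{\beta_r(1-2\e)}$ and thus $m' < \frac{(n')^2}{\beta_r^2(1-2\e)^2}$, i.e.\ $m' \le \frac{r}{2(r-1)}(1+O(\e))(n')^2$. On the other hand, $G'$ is $K_{r+1}$-free, so by Turán's theorem $m'\le t_r(n')\le \frac{r-1}{r}\cdot\frac{(n')^2}{2}$ — but that is the wrong direction; instead I use $\k_s(G')$ is large. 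From the first stage $\k_s(G')\ge (c_{r,s}-O(\d))m^{s/2}\ge (c_{r,s}-O(\e))(m')^{s/2}$ (using $m'\ge(1-\e)m$). Now apply Zykov's theorem (Theorem~\ref{th:Zykov}): since $G'$ is $K_{r+1}$-free on $n'$ vertices, $\k_s(G')\le \k_s(T_r(n'))\le\binom{r}{s}(n'/r)^s = c_{r,s}\binom{r}{2}^{s/2}(n'/r)^s$. Combining, $(c_{r,s}-O(\e))(m')^{s/2}\le c_{r,s}\binom{r}{2}^{s/2}(n'/r)^s$, which rearranges to $(m')^{1/2}\le(1+O(\e))\binom{r}{2}^{1/2}(n'/r) = (1+O(\e))\beta_r^{-1}\cdot\frac{r-1}{r}\cdot\frac{n'}{\,?}$ — I would carefully extract the clean form $(m')^{1/2}\ge(1-O(\e))\,\frac{?}{?}$. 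Then substituting back into $\d(G')\ge\beta_r(1-2\e)(m')^{1/2}$ gives $\d(G')\ge(1-O(\e))\cdot\frac{r-1}{r}\,n'$, and choosing the constants so the error is at most $4\e$ finishes the proof. Note~\eqref{eq:beta c_{r, s}} from Proposition~\ref{prop:beta c_s} is exactly the identity that makes these two constants line up, so I expect to invoke it here.

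The main obstacle is the first stage: making the clique-loss accounting tight enough that only $O(\d m)$ edges (not $O(\e m)$ or worse) are removed, uniformly over the iteration. The subtlety is that the degree threshold $\alpha$ is defined relative to the original $m$, but as edges disappear the "correct" threshold relative to the current edge count drifts; one must choose the slack constant in $\alpha$ generously enough that the surviving minimum degree still beats $\beta_r(1-2\e)\sqrt{m'}$, yet not so generously that the per-deletion clique loss $\binom{\alpha}{s-1}$ becomes too large to be absorbed by the gap between the lower bound on $\k_s(G')$ and the Kruskal–Katona/Zykov upper bound. Balancing these — essentially a one-parameter optimization hidden inside the choice of $\d$ as a function of $\e$ — is where the real care is needed; the second stage is then a short computation using Zykov's theorem and the identity~\eqref{eq:beta c_{r, s}}.
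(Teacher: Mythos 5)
There is a genuine gap in the first stage, and it is exactly where you flagged ``the real care is needed.'' Your per-vertex accounting charges each deleted vertex $v_i$ with at most $\binom{d(v_i)}{s-1}$ lost copies of $K_s$, and this bound is too weak for the comparison to close. To see this, note that the forced loss is governed by $\mex_{K_s}(\cdot,K_{r+1})$: deleting down to $(1-\e)m$ edges must destroy at least roughly $c_{r,s}\bigl(1-(1-\e)^{s/2}\bigr)m^{s/2}\approx \frac{s}{2}c_{r,s}\,\e\, m^{s/2}$ copies (minus the $\d m^{s/2}$ slack). Meanwhile, deleting $\e m$ edges at vertices of degree about $\beta_r\sqrt{m}$ means about $\e\sqrt{m}/\beta_r$ vertices, so your upper bound on the loss is about $\frac{\e\sqrt m}{\beta_r}\cdot\frac{(\beta_r\sqrt m)^{s-1}}{(s-1)!}=\frac{\e\,\beta_r^{s-2}}{(s-1)!}m^{s/2}$. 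By identity~\eqref{eq:beta c_{r, s}}, $\frac{s}{2}c_{r,s}=\frac{\binom{r-1}{s-1}}{(r-1)^{s-1}}\beta_r^{s-2}$, and $\frac{\binom{r-1}{s-1}}{(r-1)^{s-1}}<\frac{1}{(s-1)!}$ strictly for all $r\ge s\ge 3$; so your upper bound on the loss \emph{exceeds} the forced lower bound, no contradiction arises, and you get no control on the number of deleted edges. The missing idea is to use $K_{r+1}$-freeness \emph{locally}: $G_i[N_{G_i}(v_i)]$ is $K_r$-free, so by Zykov's theorem (Theorem~\ref{th:Zykov}) the vertex $v_i$ lies in at most $\k_{s-1}\bigl(T_{r-1}(d(v_i))\bigr)\le\binom{r-1}{s-1}\bigl(d(v_i)/(r-1)\bigr)^{s-1}$ copies of $K_s$, which is smaller than $\binom{d(v_i)}{s-1}$ by exactly the factor that makes the two sides of the comparison agree at first order in $\e$ (the contradiction then comes from second-order terms, with $\d=\Theta(\e^2)$). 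Relatedly, comparing against Corollary~\ref{cor:LovaszKKcor} cannot work either, since $c_{r,s}<2^{s/2}/s!$ means the Kruskal--Katona bound never conflicts with your lower bound on $\k_s(G')$; the correct comparison is with $\mex_{K_s}(m',K_{r+1})$ via Frohmader's theorem.

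The second stage also does not go through as written: both Tur\'an's theorem and Zykov's theorem give \emph{upper} bounds on $m'$ (equivalently on $\sqrt{m'}$) in terms of $n'$, whereas to convert $\d(G')\ge\beta_r(1-2\e)\sqrt{m'}$ into $\d(G')\ge\bigl(\frac{r-1}{r}-4\e\bigr)n'$ you need the \emph{lower} bound $\sqrt{m'}\ge\frac{\beta_r(1-2\e)}{2}n'$. This follows in one line from the handshake inequality: $\beta_r(1-2\e)\sqrt{m'}\le\d(G')\le 2m'/n'$ gives $n'\le\frac{2}{\beta_r(1-2\e)}\sqrt{m'}$, and then $\d(G')\ge\frac{\beta_r^2(1-2\e)^2}{2}n'>\bigl(\frac{r-1}{r}-4\e\bigr)n'$ using $\beta_r^2=2(r-1)/r$. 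No clique counting or identity~\eqref{eq:beta c_{r, s}} is needed here; that identity is what rescues the first stage, not the second.
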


\begin{proof}
Let $\e > 0$ be sufficiently small and let
\begin{equation}\label{eq:delta}
\d = \frac{s(s-2)c_{r, s}}{16}\e^2.
\end{equation}

Let $G_0 = G$.  For each $i \geq 0$, if $G_i$ contains a vertex~$v_i$ with $d_{G_i}(v_i) < \beta_r(1-2\e)e(G_i)^{1/2}$, set $G_{i+1} = G_i - v_i$.  Suppose that we delete $\lfloor \e m \rfloor$ edges in this way and that we delete edges incident to $i_0 - 1$ vertices.  (To ensure that we delete exactly $\lfloor \e  m \rfloor$ edges, if necessary we do not delete the final vertex $v_{i_0-1}$, but instead delete the appropriate number of edges incident to it.) Let $G'$ denote the resulting graph.  We have
\[
\k_s(G') \geq \k_s(G) - \sum_{i=0}^{i_0-1} \k_{s-1}\bigl(G_i[N_{G_i}(v_i)]\bigr).
\]
Because $G$ is $K_{r+1}$-free, for each $i$, $G_i[N_{G_i}(v_i)]$ is $K_r$-free.  Hence, the number of copies of $K_s$ in $G_i$ that contain $v_i$ is at most $\ex_{K_{s-1}}\bigl(d_{G_i}(v_i), K_r\bigr)$. By Theorem~\ref{th:Zykov}, for all~$p$, 
\[
    \ex_{K_{s-1}}(p, K_r) = \k_{s-1}(T_{r-1}(p)) \leq \parens[\bigg]{\frac{p}{r-1}}^{s-1}\binom{r-1}{s-1}.  
\]
So, we have
\begin{equation}\label{eq:remainingCliques}
\k_s(G') \geq \k_s(G) - \sum_{i=0}^{i_0-1} \biggl(\dfrac{d_{G_i}(v_i)}{r-1}\biggr)^{s-1}\binom{r-1}{s-1}.
\end{equation}
By assumption, for all $i \leq i_0-1$, $d_{G_i}(v_i) < \beta_r(1-2\e)e(G_i)^{1/2}$.  Moreover, by the definition of~$i_0$,
\begin{equation*}\label{eq:G'DegreeSum}
\sum_{i=0}^{i_0-1} d_{G_i}(v_i) < \e m + d_{G_{i_0 - 1}}(v_{i_0 - 1}) \leq \e m + \beta_r(1-2\e)m^{1/2}.
\end{equation*}
Combining this bound with \eqref{eq:remainingCliques} gives
\begin{align}\label{eq:lostCliquesUB}
\k_s(G) - \k_s(G') &< \biggl\lceil \frac{\e m + \beta_r(1-2\e)m^{1/2}}{\beta_r(1-2\e)(m-\e m)^{1/2}} \biggr\rceil \frac{\binom{r-1}{s-1}}{(r-1)^{s-1}} \beta_r^{s-1}(1-2\e)^{s-1} m^{(s-1)/2} \nonumber\\
&< \e \frac{\binom{r-1}{s-1}}{(r-1)^{s-1}}\bigl(\beta_r(1 - \e)\bigr)^{s-2} m^{s/2}.
\end{align}

On the other hand, if $m$ is sufficiently large, then Theorem~\ref{th:Froh}, Proposition~\ref{prop:colexTuranCliques}, and our assumption on $G$ imply that
\begin{align*}
\k_s(G) - \k_s(G') &\geq \k_s(G) - \mex_{K_s}\bigl((1-\e)m, K_{r+1}\bigr) \\
&\geq (c_{r, s} - \d)m^{s/2} - (1+\e^3)c_{r, s}(1-\e)^{s/2}m^{s/2} \\
&\geq \biggl(c_{r, s} - \d - c_{r, s}\Bigl(1 - \frac{s\e}{2} + \frac{3s(s-2)\e^2}{16}\Bigr)\biggr)m^{s/2}.
\end{align*}
By \eqref{eq:delta} and Proposition~\ref{prop:beta c_s},
\begin{align*}
\k_s(G) - \k_s(G') &\geq \biggl( - \d + \frac{s\e}{2} c_{r, s} - \frac{3s(s-2)\e^2}{16}c_{r, s}\biggr)m^{s/2} \\
&= \biggl(\frac{s\e}{2} c_{r, s} - \frac{s(s-2)\e^2}{4}c_{r, s}\biggr)m^{s/2} \\
&= \e \frac{\binom{r-1}{s-1}}{(r-1)^{s-1}} \beta_r^{s-2}\biggl(1  - \frac{s-2}{2}\e \biggr) m^{s/2} \\
&> \e \frac{\binom{r-1}{s-1}}{(r-1)^{s-1}}\bigl(\beta_r(1 - \e)\bigr)^{s-2} m^{s/2},
\end{align*}
which contradicts~\eqref{eq:lostCliquesUB}.

So, $G$ has a subgraph~$G'$ with $n' \leq n$ vertices and $m' \geq \bigl(1 - \e\bigr)m$ edges such that
\[
\d(G') \geq \beta_r(1-2\e)(m')^{1/2}.
\]
On the other hand, $\d(G') \leq 2m'/n'$, so
\begin{equation}\label{eq:denseSubgraphVertices}
n' \leq \frac{2}{\beta_r(1 - 2\e)}(m')^{1/2}.
\end{equation}
It follows from \eqref{eq:denseSubgraphVertices} and \eqref{eq:beta} that
\begin{equation*}\label{eq:denseSubgraphMinDegree}
\d(G') \geq \beta_r^2(1-2\e)^2\frac{n'}{2} > \beta_r^2(1 - 4\e)\frac{n'}{2} > \biggl(\frac{r-1}{r} - 4\e\biggr) n'.
\end{equation*}
This completes the proof.
\end{proof}

Now we are ready to prove Theorem~\ref{th:mexStability}.  The argument is similar to the proof of the $K_{r+1}$-free case of the Erd\H{o}s--Simonovits stability theorem, Theorem~\ref{th:stability}.

\begin{proof}[Proof of Theorem \ref{th:mexStability}]
Given $\e$, let
\begin{equation}\label{eq:epsilonPrime}
\e' = \frac{\e}{16r+1}.
\end{equation}
Let $G'$ be the graph obtained by inputting $r$, $s$, and $\e'$ into Lemma~\ref{le:denseSubgraph} and let $m' = e(G')$.

By Lemma~\ref{le:denseSubgraph},
\[
e(G') \geq \biggl(\frac{r-1}{r} - 4\e'\biggr)\frac{(n')^2}{2}.
\]
So, if $\e'$ is sufficiently small, Tur\'an's theorem implies that $G'$ contains a copy of~$K_r$ with vertex set~$U = \set{u_1, \dots, u_r}$.  Because $G'$ is $K_{r+1}$-free, every vertex of~$V(G') \setminus U$ has at most~$r-1$ neighbors in $U$.  Let $A = \setof{v \in V(G') \setminus U}{d_U(v) = r - 1}$ and let $B = V(G') \setminus (U \cup A)$.  By definition,
\[
e(U, V(G') \setminus U) \leq (r-1)\abs{A} + (r-2)\abs{B} = (r-1)(n'-r-\abs{B}) + (r-2)\abs{B}.
\]
On the other hand, by Lemma~\ref{le:denseSubgraph},
\[
e(U, V(G') \setminus U) \geq r\biggl(\frac{r-1}{r} - 4\e'\biggr)n' - \binom{r}{2} = (r - 1 - 4\e' r)n' - \binom{r}{2}.
\]
It follows that
\begin{equation}\label{eq:cardinalityOfB}
\abs{B} \leq (r-1)(n'-r) - (r - 1 - 4\e' r)n' + \binom{r}{2} = 4\e' r n' - \binom{r}{2} < 4\e' r n'.
\end{equation}

For $i =1$, \dots,~$r$, let $A_i = \setof{v \in A}{v \nadj u_i}$.  It is easy to see that the $A_i$ partition $A$ and that each $A_i$ is an independent set.  So, if we delete all of the vertices of~$B$ from $G'$, the resulting graph is $r$-partite.

It remains to show that deleting the vertices of~$B$ from $G'$ removes only a small number of edges.  By Lemma~\ref{le:denseSubgraph}, we have $\beta_r(1-2\e)(m')^{1/2} \leq \d(G') \leq 2m'/n'$, which means that
\begin{equation*}
n' \leq \frac{2}{\beta_r(1 - 2\e)}(m')^{1/2}
\end{equation*}
(just as in~\eqref{eq:denseSubgraphVertices}).  This bound, \eqref{eq:cardinalityOfB}, and~\eqref{eq:beta} imply that if $m$ is sufficiently large, then the number of edges of~$G'$ incident to a vertex of~$B$ is at most
\[
\abs{B}(n-\abs{B}) + \binom{\abs{B}}{2} \leq 4\e' r(n')^2 + 8\e'^2 r^2 (n')^2 \leq  \frac{16\e' r}{\beta_r^2(1 - 2\e')^2}m' + \frac{32\e'^2 r^2}{\beta_r^2(1 - 2\e')^2}m' < 16 \e' r m.
\]

It follows from~\eqref{eq:epsilonPrime} that we have deleted at most~$\e m$ edges of~$G$.  This completes the proof.
\end{proof}

\begin{proof}[Proof of Corollary~\ref{cor:mexStabilityGeneral}]
Let $\d'$ and $m_0$ be the values obtained by putting $r$, $s$, and $\e/2$ into Theorem~\ref{th:mexStability}.  Let
\begin{equation}\label{eq:alphaDef}
\alpha = \min\biggl\{\e^2, \frac{\d'}{5\cdot 2^{(s+2)/2}}\biggr\}
\end{equation}
and let
\begin{equation}\label{eq:deltaSmall}
\d = \e\,\alpha.
\end{equation}

We need to show that $G$ has a large subgraph with positive density.  Let $G_0 = G$ and, for each $i \geq 0$, if $G_i$ contains a vertex~$v_i$ with $d_{G_i}(v_i) < \e\, e(G_i)^{1/2}$, set $G_{i+1} = G_i - v_i$.  Suppose that we delete $\lfloor \alpha m \rfloor$ edges in this way and that we delete edges incident to $i_0 - 1$ vertices.  (To ensure that we delete exactly $\lfloor \alpha m \rfloor$ edges, if necessary we do not delete the final vertex~$v_{i_0-1}$, but instead delete the appropriate number of edges incident to it.) Let $G'$ denote the resulting graph.  We have
\begin{equation}\label{eq:stabilityGeneralCliquesLostUB}
\k_s(G) - \k_s(G') \leq \sum_{i=0}^{i_0-1} \k_{s-1}\bigl(G_i[N_{G_i}(v_i)]\bigr) \leq \biggl\lceil \frac{\alpha m + \e m^{1/2}}{\e (m - \alpha m)^{1/2}} \biggr\rceil \binom{\e\, m^{1/2}}{s-1} \leq \frac{2\alpha \e^{s-2}}{(s-1)!} m^{s/2}.
\end{equation}

On the other hand, by Theorem~\ref{th:mexErdosSimonovits} and our assumption on $G$,
\[
\k_s(G) - \k_s(G') \geq \k_s(G) - \mex_{K_s}\bigl((1-\alpha)m, F\bigr) \geq \bigl(c_{r, s} - \d - (1 + \e^3) c_{r, s}(1-\alpha)^{s/2}\bigr)m^{s/2}.
\]
It then follows from~\eqref{eq:deltaSmall} and the fact that $s \geq 3$ that
\[
\k_s(G) - \k_s(G') \geq \biggl(c_{r, s} - \d - c_{r, s}\biggl(1-\frac{s}{4}\alpha\biggr) \biggr)m^{s/2} \geq \frac{s c_{r, s} \alpha}{8} m^{s/2} > \alpha \e^{s-2} m^{s/2},
\]
which contradicts~\eqref{eq:stabilityGeneralCliquesLostUB}.

So, we may assume that $G$ has a subgraph~$G'$ with $m' \geq (1-\alpha)m$ edges and $n'$ vertices such that $\delta(G') \geq \e (m')^{1/2}$.

Let
\begin{equation*}\label{eq:etaDef}
\eta = \e^2 \alpha.
\end{equation*}
Because $G'$ is $F$-free, Theorem~\ref{th:removal} implies that if $m'$ (and hence $n'$) is sufficiently large, then $G'$ can be made $K_{r+1}$-free by removing at most~$ \eta (n')^2$ edges.  Because $\e (m')^{1/2} \leq \d(G') \leq 2m'/n'$, 
\begin{equation}\label{eq:edgesRemovedFromG'}
\eta (n')^2 \leq \frac{4\eta}{\e^2} m' = 4\alpha m'.
\end{equation}

Let $G''$ be the graph obtained by removing edges from $G'$ and let $m'' = e(G'')$.  By~\eqref{eq:edgesRemovedFromG'},
\begin{equation}\label{eq:G''edges}
m'' \geq (1-5\alpha)m.
\end{equation}
By Corollary~\ref{cor:LovaszKKcor}, each of the edges that we have deleted from $G$ was contained in at most $(1+\e^2)\frac{2^{(s-2)/2}}{(s-2)!} m^{(s-2)/2}$ copies of $K_s$ in $G$.  It follows from~\eqref{eq:alphaDef}, \eqref{eq:deltaSmall}, and our assumption on $\k_s(G)$ that
\begin{align*}
\k_s(G'') &\geq \k_s(G) - 5\alpha m \cdot 2 \cdot \frac{2^{(s-2)/2}}{(s-2)!} m^{(s-2)/2} \\
&\geq (1 - \d)\mex_{K_s}(m, F) - \frac{\d'}{2}m^{s/2} \\
&> (1 - \d')\mex_{K_s}(m'', F).
\end{align*}
Hence, by Theorem~\ref{th:mexStability} and our choice of~$\d'$, $G''$ can be made $r$-partite by removing at most~$\frac{\e m''}{2} \leq \frac{\e m}{2}$ edges.  So, by \eqref{eq:alphaDef} and~\eqref{eq:G''edges}, we have removed at most~$5\e^2 m + \frac{\e m}{2} < \e m$ edges of~$G$ in total.  This completes the proof.
\end{proof}



\section{Open questions}\label{sec:open}

The first natural question about extensions of the results we have proved is to consider the dependence of $t$ (the size of the blowup) on the other parameters in Theorem~\ref{th:mexErdosStone}, the analogue of the Erd\H{o}s--Stone theorem. The optimal dependence on $n$ in the latter theorem is $\Omega(\log n)$ (\cite{BollobasErdos73,ChvatalSzemeredi81}). The  following conjecture is the natural analogue.

\begin{conjecture}\label{conj:ErdosStoneSharp}
There exists a constant~$c_{r, s,\e} > 0$ such that it is always possible to take $t \geq c_{r, s,\e} \log m$, and moreover this is best possible up to a constant factor.
\end{conjecture}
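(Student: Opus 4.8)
The plan is to prove both halves of Conjecture~\ref{conj:ErdosStoneSharp} by grafting the sharp, $\Omega(\log n)$-strength versions of the Erd\H{o}s--Stone theorem of Bollob\'as--Erd\H{o}s~\cite{BollobasErdos73} and Chv\'atal--Szemer\'edi~\cite{ChvatalSzemeredi81} onto the clique-counting reductions already developed in Section~\ref{sub:proof_of_theorem_ref_th_mexerdosstone}. Throughout we may assume $\e$ is small, since by Corollary~\ref{cor:LovaszKKcor} the hypothesis of Theorem~\ref{th:mexErdosStone} becomes unsatisfiable for large~$m$ once $\e$ exceeds a fixed threshold, and the claim is then vacuous.

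For the lower bound on~$t$, I would first repeat the edge-deletion step at the start of the proof of Theorem~\ref{th:mexErdosStone} to pass from~$G$ to a subgraph~$G'$ with $n'$ vertices and $m' \geq (1-\rho)m$ edges satisfying $\k_s(G') \geq (c_{r,s}+\e)(m')^{s/2}$ and $n' \leq \e^{-2}(m')^{1/2}$; in particular $m' = \Theta_{r,s,\e}\bigl((n')^2\bigr)$ and $\log n' = \Omega(\log m)$. The new step is to convert the surplus in~$\k_s(G')$ into a robust supply of copies of~$K_{r+1}$. Let $E_0$ be a smallest edge set such that $G'-E_0$ is $K_{r+1}$-free. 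By Theorem~\ref{th:Froh} and Proposition~\ref{prop:colexTuranCliques}, $\k_s(G'-E_0) \leq \mex_{K_s}(m',K_{r+1}) = c_{r,s}(m')^{s/2} + O\bigl((m')^{(s-1)/2}\bigr)$, so the number of copies of~$K_s$ of~$G'$ destroyed by removing~$E_0$ is at least $\tfrac{\e}{2}(m')^{s/2}$ once $m$ is large; since the common neighbourhood of an edge has at most~$m'$ edges, Theorem~\ref{th:LovaszKK} bounds the number of copies of~$K_s$ through any one edge by $(1+o(1))\tfrac{2^{(s-2)/2}}{(s-2)!}(m')^{(s-2)/2}$, whence $\abs{E_0} = \Omega_{r,s,\e}(m') = \Omega_{r,s,\e}\bigl((n')^2\bigr)$. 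Thus $G'$ is $\Omega\bigl((n')^2\bigr)$-far from being $K_{r+1}$-free, so by the graph removal lemma it contains $\Omega_{r,s,\e}\bigl((n')^{r+1}\bigr)$ copies of~$K_{r+1}$; and the fact that a graph on~$N$ vertices with a positive density of copies of~$K_{r+1}$ must contain $K_{r+1}[t]$ for some $t = \Omega(\log N)$ — the quantitative content of~\cite{BollobasErdos73,ChvatalSzemeredi81} — then gives $K_{r+1}[t] \subseteq G' \subseteq G$ with $t = \Omega(\log n') = \Omega(\log m)$.

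For the matching upper bound on~$t$ I would adapt the standard construction witnessing the sharpness of the Erd\H{o}s--Stone theorem. Fix $r \mid n$, set $N = n/r$, let $B$ be a random bipartite graph $G(N/2,N/2,p)$ for a suitably small constant $p = p(r,s)>0$, and let $G = T_r(n)\cup B$ with $B$ inserted inside one class of~$T_r(n)$. A short structural argument shows that a copy of $K_{r+1}[t]$ in~$G$ forces two disjoint independent $t$-sets of~$B$ that are complete to each other, i.e.\ a copy of~$K_{t,t}$ in~$B$ (both sides are automatically independent since $B$ is bipartite); as $G(N/2,N/2,p)$ contains no~$K_{t,t}$ for $t \geq C_p\log N$ with high probability, a typical~$B$ makes $G$ free of $K_{r+1}[t]$ for some $t = \Theta(\log N) = \Theta(\log m)$, where $m = e(G) = t_r(n)+e(B) = \Theta(N^2)$. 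It then remains to check that $\k_s(G) \geq \mex_{K_s}(m,K_{r+1}) + \e_0 m^{s/2}$ for a fixed $\e_0 = \e_0(r,s)>0$. Writing $\lambda = e(B)/t_r(n)$ (a small constant) and counting copies of~$K_s$ through a $B$-edge, one gets $\k_s(G) = N^s\bigl(\binom rs + \lambda\binom r2\binom{r-1}{s-2}\bigr)$, while Theorem~\ref{th:Froh} and Proposition~\ref{prop:colexTuranCliques} give $\mex_{K_s}(m,K_{r+1}) = \binom rs N^s(1+\lambda)^{s/2} + O(N^{s-1})$; for $\lambda$ small the required inequality reduces to $\tfrac s2\binom rs < \binom r2\binom{r-1}{s-2}$, which simplifies to $s>2$ and hence holds for all $s \geq 3$. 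Choosing $p$ small enough, and then $\e_0$ small enough, completes the construction, and letting $n\to\infty$ along a suitable arithmetic progression produces infinitely many~$m$ as required.

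The main obstacle is the middle of the lower-bound argument: turning the $K_s$-surplus into a positive density of copies of~$K_{r+1}$ and then into a logarithmic blow-up. This routes the proof through the graph removal lemma and the genuinely difficult sharp Erd\H{o}s--Stone theorem, so the constant~$c_{r,s,\e}$ we obtain will be extremely small (tower-type in~$1/\e$); this is acceptable because the conjecture only concerns the order of magnitude~$\Theta(\log m)$. On the sharpness side, the construction only delivers a \emph{fixed} surplus $\e_0(r,s)\,m^{s/2}$, which is nonetheless enough to rule out replacing $\log m$ by any $\omega(\log m)$ in a statement of Erd\H{o}s--Stone type; pinning down the optimal constant, or the true dependence of~$t$ on~$\e$, would require substantially more care.
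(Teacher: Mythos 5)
First, note that the paper does not prove this statement: Conjecture~\ref{conj:ErdosStoneSharp} is posed as an open problem in Section~\ref{sec:open}, so there is no proof of the authors' to compare against and your argument must stand on its own. On its own terms, the architecture is right, and the sharpness half is essentially complete: the count $\k_s(G)=N^s\bigl(\binom{r}{s}+\lambda\binom{r}{2}\binom{r-1}{s-2}\bigr)$ against $\mex_{K_s}(m,K_{r+1})=\binom{r}{s}(1+\lambda)^{s/2}N^s+O(N^{s-1})$ is correct, the reduction of the required inequality to $(r-1)(s-1)>r-s+1$, i.e.\ $s>2$, checks out, and the structural claim that a copy of $K_{r+1}[t]$ forces $K_{t,t}\of B$ is sound (every transversal of the blow-up meets the class containing $B$ in exactly two vertices, so two entire parts lie in that class and must sit on opposite sides of $B$). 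Two caveats: the construction yields a fixed surplus $\e_0(r,s)m^{s/2}$ rather than $\e m^{s/2}$ for the given $\e$ (which you acknowledge, and which suffices to rule out $\omega(\log m)$ for all $\e\le\e_0$), and it produces no copies of $K_s$ at all when $r\le s-2$; in that degenerate regime one can instead use $G(n,1/2)$ with $n\approx\sqrt{2m}$.

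The genuine gap is in the lower bound, at the step you describe as ``the quantitative content of~\cite{BollobasErdos73,ChvatalSzemeredi81}.'' Those papers prove that an \emph{edge} surplus $e(G)\ge t_r(n)+\e n^2$ forces $K_{r+1}[\Omega(\log n)]$; they do not prove that a positive \emph{density of copies of $K_{r+1}$} forces $K_{r+1}[\Omega(\log n)]$, and the latter does not reduce to the former: a clique on $cN$ vertices together with isolated vertices has $\Omega(N^{r+1})$ copies of $K_{r+1}$ yet can have far fewer than $t_r(N)$ edges. So, as written, the pivotal step of your lower bound is not supported by the sources you cite. The statement you need is nevertheless a theorem --- it is due to Nikiforov (``Graphs with many $r$-cliques have large complete $r$-partite subgraphs,'' Bull.\ London Math.\ Soc.\ 40 (2008)), and can also be derived from Erd\H{o}s's hypergraph K\H{o}v\'ari--S\'os--Tur\'an theorem --- so the gap is reparable by invoking or proving that result. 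With that repair, your chain (edge-deletion to a core $G'$ on $n'=O_\e(\sqrt{m'})$ vertices; distance $\Omega_{r,s,\e}\bigl((n')^2\bigr)$ from $K_{r+1}$-freeness via Theorem~\ref{th:Froh} and Theorem~\ref{th:LovaszKK}; the removal lemma; then Nikiforov's theorem) does give $t\ge c_{r,s,\e}\log m$, with a tower-type constant inherited from the removal lemma.
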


We also believe that a supersaturation result should hold for $\mex_{K_s}(m, K_{r+1})$.

\begin{conjecture}\label{conj:supersat}
For all $r \geq s \geq 3$ and all $\e > 0$, there exist $\d > 0$ and $m_0$ such that if $m \geq m_0$ and $G$ is a graph with $e(G)=m$ and
	\[
		\k_s(G) \ge \mex_{K_s}(m,K_{r+1}) + \e m^{s/2},
	\]
then $G$ contains at least~$\d m^{(r+1)/2}$ copies of~$K_{r+1}$.
\end{conjecture}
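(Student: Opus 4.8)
First, a much weaker bound comes almost for free. If we delete one edge from each copy of~$K_{r+1}$ in~$G$, we destroy at most $\tau(1+o(1))\frac{(2m)^{(s-2)/2}}{(s-2)!}$ copies of~$K_s$ --- here $\tau$ is the number of edges deleted, and we bound $\k_s(e)$ for each deleted edge~$e$ by applying Theorem~\ref{th:LovaszKK} to the common neighborhood of its endpoints --- while the resulting $K_{r+1}$-free graph has at most~$m$ edges, so Theorem~\ref{th:Froh} gives $\mex_{K_s}(m, K_{r+1}) \ge \k_s(G) - \tau(1+o(1))\frac{(2m)^{(s-2)/2}}{(s-2)!}$, whence $\tau = \Omega(\e m)$ and hence $\k_{r+1}(G) \ge \tau = \Omega(\e m)$. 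The point of the conjecture is to replace this by the much larger $\Omega(m^{(r+1)/2})$, and the plan is to mimic the way the Erd\H{o}s--Simonovits supersaturation theorem is deduced from the Erd\H{o}s--Stone and Tur\'an theorems. \emph{Stage (a):} exactly as in the proof of Theorem~\ref{th:mexErdosStone}, repeatedly delete edges lying in few copies of~$K_s$ to pass to a subgraph~$G'$ with $m' = \Theta(m)$ edges, $\d'_s(G') \ge c\,(m')^{(s-2)/2}$ for a suitable constant $c = c(\e) > 0$, and $\k_s(G') \ge (c_{r, s} + \e/2)(m')^{s/2}$; the minimum $K_s$-degree bound forces $\d(G') = \Omega((m')^{1/2})$ and hence $n' := \abs{V(G')} = O((m')^{1/2})$. \emph{Stage (b):} establish a counting form of Zykov's theorem --- if $H$ has $N$ vertices and $\k_s(H) \ge (\binom{r}{s}/r^s + \eta)N^s$, then $\k_{r+1}(H) \ge \d'N^{r+1}$ for some $\d' = \d'(r, s, \eta) > 0$ --- which follows from Theorem~\ref{th:Zykov} by the usual averaging over vertex subsets: for a large enough constant~$k$, a uniformly random $W \of V(H)$ with $\abs{W} = k$ satisfies $\mathbb{E}[\k_s(H[W])] \ge \ex_{K_s}(k, K_{r+1}) + \frac{\eta}{2}k^s$, so a constant fraction of the $\binom{N}{k}$ subsets~$W$ have $\k_s(H[W]) > \ex_{K_s}(k, K_{r+1}) = \k_s(T_r(k))$ and therefore contain a copy of~$K_{r+1}$; counting incidences between copies of~$K_{r+1}$ and $k$-subsets then yields $\k_{r+1}(H) = \Omega(N^{r+1})$.

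The main obstacle is joining the two stages: applying Stage~(b) to~$G'$ needs $\k_s(G') > (\binom{r}{s}/r^s + \eta)(n')^s$, and since $c_{r, s} = \binom{r}{s}/r^s\cdot(2/\beta_r)^s$, this holds precisely when $n' \lesssim (2/\beta_r)(m')^{1/2}$, that is, when $G'$ has no more vertices than the associated colex Tur\'an graph. But Stage~(a) only provides $n' = O((m')^{1/2})$ with a constant that deteriorates as $\e \to 0$, and in fact $n'$ genuinely can exceed $(2/\beta_r)(m')^{1/2}$: a disjoint union of $t$ cliques~$K_N$ with $N = \theta(m')^{1/2}$ satisfies every conclusion of Stage~(a) for an appropriate constant $\theta = \theta(r, s, \e)$, yet has $n' = (2/\theta)(m')^{1/2}$, which is larger than $(2/\beta_r)(m')^{1/2}$ once $\e$ is small. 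So $G'$ need not be dense enough as a whole, and the averaging of Stage~(b) cannot be applied to all of~$V(G')$. (For the same reason, invoking Theorem~\ref{th:mexErdosStone} to produce a copy of~$K_{r+1}[t]$ inside~$G$ does not help: that only furnishes $t^{r+1} = O(1)$ copies of~$K_{r+1}$.)

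What rescues the disjoint-cliques example is that each~$K_N$ is itself a dense subgraph on $\Theta(m^{1/2})$ vertices, and I expect this to be the real content of the conjecture: one should locate inside~$G'$ a subgraph~$H$ with $\abs{V(H)} = \Theta(m^{1/2})$ and $\d(H) \ge (1 - 1/r + \alpha)\abs{V(H)}$ for some fixed $\alpha = \alpha(r, s, \e) > 0$. Given such an~$H$, the minimum-degree supersaturation argument (equivalently, running the Erd\H{o}s--Stone analogue inside~$H$ and using convexity) produces $\Omega(\abs{V(H)}^{r+1}) = \Omega(m^{(r+1)/2})$ copies of~$K_{r+1}$, finishing the proof. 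Producing~$H$ is the hard step. One cannot merely take a large clique, because $G'$ may have clique number only $\Theta(\log m)$ in spite of having $\Omega(m^{s/2})$ copies of~$K_s$ (a dense pseudorandom graph does this); what seems to be needed is a localized or stability version of the Lov\'asz form of Kruskal--Katona (Theorem~\ref{th:LovaszKK}), or of Frohmader's theorem, to the effect that an abundance of copies of~$K_s$ relative to the edge count forces a piece of linear minimum degree on $\Theta(m^{1/2})$ vertices. My first attempt would be an iterated extraction --- pass to the neighborhood of a vertex lying in many copies of~$K_s$ and recurse, tracking the edge count and the $K_s$-count at each step --- and, failing that, to try to exploit the compression and shifting arguments that underlie the colex extremal configurations.
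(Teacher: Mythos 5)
The statement you are proving is stated in the paper as Conjecture~\ref{conj:supersat}, an open problem in Section~\ref{sec:open}; the paper offers no proof, so there is nothing to compare your argument against except the conjecture itself. Your write-up is candid that it is not a proof, and indeed it is not one. What you do establish is correct: the edge-deletion argument giving $\k_{r+1}(G) = \Omega(\e m)$ is sound (the count $\tau$ of deleted edges is at most the number of copies of $K_{r+1}$, and the bound on $\k_s(e)$ via Theorem~\ref{th:LovaszKK} applied to the common neighborhood is right); Stage~(a) is a faithful reuse of the cleaning step from the proof of Theorem~\ref{th:mexErdosStone}; Stage~(b) is the standard averaging supersaturation argument and is fine; and your diagnosis of why the two stages do not meet --- that $n'$ can exceed $(2/\beta_r)(m')^{1/2}$, as the disjoint-cliques example shows for suitable parameter ranges, so that $G'$ need not be dense enough globally for the averaging to apply --- is accurate and is genuinely the crux.

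The gap is the step you yourself flag: extracting from $G'$ a subgraph $H$ on $\Theta(m^{1/2})$ vertices with $\d(H) \ge (1 - 1/r + \alpha)\abs{V(H)}$. Nothing in your argument produces such an $H$, and the intermediate claim you would need --- that a graph with $m'$ edges and $(c_{r,s}+\e/2)(m')^{s/2}$ copies of $K_s$ must contain a piece of linear minimum degree on $\Theta((m')^{1/2})$ vertices --- is itself an unproven localized stability statement for Kruskal--Katona/Frohmader, plausibly comparable in difficulty to the conjecture. (It is also worth noting that linear minimum degree above the Tur\'an threshold is a sufficient but stronger condition than what you need; a weaker ``many $K_s$'s concentrated on $O(m^{1/2})$ vertices'' statement would already feed into Stage~(b), but that too is not established.) So the proposal should be read as a reduction of the conjecture to a localized extremal lemma, together with a correct but much weaker linear lower bound, not as a proof.
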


Finally, when $F$ is a bipartite graph, the Erd\H{o}s--Simonovits theorem, Theorem~\ref{th:ErdosSimonovits}, only says that $\ex(n, F) = o(n^2)$.  In the same way, when $F$ is a graph with $\chi(F) \leq s$, Theorem~\ref{th:mexErdosSimonovits} only tells us that $\mex_{K_s}(m, F) = o(m^{s/2})$.  It would be interesting to determine the order of magnitude of $\mex_{K_s}(n, F)$ in such ``sparse'' cases.  In particular, what are $\mex_{K_s}(m, K_{a, b})$ and $\mex_{K_s}(m, C_{2k})$?

\bibliographystyle{plain}
\bibliography{mex_refs}

\end{document}